\newtheorem{lemma}[equation]{Lemma}
\newtheorem{theorem}[equation]{Theorem}
\newcommand{\ddc}{{\mathrm{dd^c}}}
\newcommand{\C}{{\mathbb{C}}}
\renewcommand{\P}{{\mathbb{P}}}
\newcommand{\R}{{\mathbb{R}}}
\newcommand{\rank}{\mathrm{rank}}
\numberwithin{equation}{section}
\begin{document}
\title[On generalized Gauss maps of minimal surfaces sharing hypersurfaces]{On generalized Gauss maps of minimal surfaces sharing hypersurfaces in a projective variety} 
\author{Si Duc Quang}
\address{(Si Duc Quang) Department of Mathematics, Hanoi National University of Education, 136-Xuan Thuy, Cau Giay, Hanoi, Vietnam}
\email{quangsd@hnue.edu.vn}
\author{Do Thi Thuy Hang}
\address{(Do Thi Thuy Hang) Department of Mathematics, Thang Long University, Nghiem Xuan Yem, Hoang Mai, Hanoi, Vietnam}
\email{hangdtt@thanglong.edu.vn}
\maketitle
\begin{abstract}
In this article, we study the uniqueness problem for the generalized gauss maps of minimal surfaces (with the same base) immersed in $\R^{n+1}$ which have the same inverse image of some hypersurfaces in a projective subvariety $V\subset\P^n(\C)$. As we know, this is the first time the unicity of generalized gauss maps on minimal surfaces sharing hypersurfaces in a projective varieties is studied. Our results generalize and improve the previous results in this field.
\end{abstract}
\def\thefootnote{\empty}
\footnotetext{2010 Mathematics Subject Classification: Primary 53A10; Secondary 53C42.\\
\hskip8pt Key words and phrases: Gauss map, value distribution, holomorphic curve, uniqueness, algebraic dependence, hypersurface.}
%\tableofcontents

\section{Introduction and Main results} 

Let $x_1:S_1\rightarrow \R^{n+1}$ and $x_2:S_2\rightarrow\R^{n+1}$ be two oriented non-flat minimal surfaces immersed in $\R^{n+1}$ and
let $G_1:S_1\rightarrow \P^{n}(\C)$ and $G_2:S_2\rightarrow \P^{n}(\C)$ be their generalized Gauss maps. 
Assume that there is a conformal diffeomorphism $\Phi$ of $S_1$ onto $S_2$ and the Gauss map of the minimal surface $x_2\circ\Phi: S_1\rightarrow\P^n(\C)$ is given by $G_2\circ\Phi$. Then $f^1=G_1$, $f^2=G_2\circ\Phi$ are two nonconstant holomorphic maps from $S_1$ into $\P^n(\C)$.
In 1993, Fujimoto obtained the following result.

\vskip0.2cm
\noindent
\textbf{Theorem A} (cf. \cite[Theorem 1.2]{Fu93a}). {\it Under the notation be as above, let $H_1,\ldots,H_q$ be $q$ hyperplanes of $\P^n(\C)$ in general position such that
\begin{itemize}
\item[(a)]  $(f^1)^{-1}(H_j)=(f^2)^{-1}(H_j)$ for every $j,$
\item[(b)] $f^1=f^2$ on $\bigcup_{j=1}^q(f^1)^{-1}(H_j)\setminus K$ for a compact subset $K$ of $S_1$.
\end{itemize}
Then we have necessarily $f^1=f^2$
\begin{itemize}
\item[(1)] if $q>(n+1)^2+\frac{n(n+1)}{2}$ for the case where $S_1$ is complete and has infinite total curvature or
\item[(2)] if $q\ge (n+1)^2+\frac{n(n+1)}{2}$ for the case where $K=\o$ and $S_1$ and $S_2$ are both complete and have finite total curvature.
\end{itemize}}

In 2017, J. Park and M. Ru \cite{RP} considered the case where $f^1$ and $f^2$ are linearly nondegenerate with an addition assumption that $\bigcap_{j=1}^k(f^1)^{-1}(H_{i_j})=\o$ for every $1\le i_1<\cdots<i_k\le q\ (k\ge 2)$.

Recently, in \cite{Q21}, the author initially studied the modified defect relation for the Gauss map of a minimal surface into a projective variety with hypersurfaces in subgeneral position. Motivated by the methods of \cite{Q19,Q21}, in this paper, we will generalize the above mentioned results to the cases where gauss maps into a projective subvariety of $\P^n(\C)$ have the same inverse image for some hypersurfaces in subgeneral position.

In order to state our results, we recall the following. Let $S$ be an open complete Riemann surface in $\R^{n+1}$. Let $f$ be a holomorphic map from $S$ into an $\ell$-dimension projective subvariety $V$ of $\P^n(\C)$ and let $Q$ be a hypersurface in $\P^n(\C)$ of degree $d$. By $\nu_{Q(f)}$ we denote the pull-back of the divisor $Q$ by $f$. Let $F=(f_0,\ldots,f_n)$ be a reduced representation of $f$. Assume that, the hypersurface $Q$ has a defining polynomial, denoted again by the same notation $Q$ (throughout this paper) if there is no confusion, given by
$$ Q(x_0,\ldots,x_n)=\sum_{I\in\mathcal T_d}a_Ix^I,$$ 
where $\mathcal T_d=\{(i_0,\ldots,i_n)\in\mathbb Z^{n+1}_+; i_0+\cdots+i_n=d\}$, $a_I\in\C$ are not all zero for $I\in\mathcal T_d$ and $x^I=x_0^{i_0}\ldots x_n^{i_n}$ for each $i=(i_0,\ldots,i_n)$. We set
$$ Q(F)=\sum_{I\in\mathcal T_d}a_If^I,$$
where $f^I=f_0^{i_0}\ldots f_n^{i_n}$ for each $I\in\mathcal T_d$. Throughout this paper, for each given hypersurface $Q$ we assume that $\|Q\|=(\sum_{I\in\mathcal T_d}|a_I|^2)^{1/2}=1$. 

Denote by $I(V)$ the ideal of homogeneous polynomials in $\C [x_0,...,x_n]$ defining $V$ and by $\C [x_0,...,x_n]_d$ the vector space of all homogeneous polynomials in $\C [x_0,...,x_n]$ of degree $d$ including the zero polynomial. Define 
$$I_d(V):=\dfrac{\C [x_0,...,x_n]_d}{I(V)\cap \C [x_0,...,x_n]_d}\text{ and }H_V(d):=\dim I_d(V).$$
Denote by $[D]$ the equivalent class  in $I_d(V)$ of the element $D\in \C [x_0,...,x_n]_d$. 

For the variety $V$ of $\P^n(\C)$ such that $f(S)\subset V$, we say that $f$ is nondegenerate over $I_d(V)$ if there is no $[Q]\in I_d(V)\setminus \{0\}$ such that $Q(F)\equiv 0$.

Let $Q_1,...,Q_q\ (q\ge N+1)$ be $q$ hypersurfaces in $\P^n(\C)$. The hypersurfaces $Q_1,...,Q_q$ are said to be in $N$-subgeneral position with respect to $V$ if 
$$ V\cap \left(\bigcap_{j=1}^{N+1}Q_{i_j}\right)=\varnothing \ \forall\ 1\le i_1<\cdots <i_{N+1}\le q.$$
Our first main result is stated as follows.
\begin{theorem}\label{1.1} 
Let $V$ be an $\ell$-dimension projective subvariety of $\P^n(\C)$. Let $S_1,S_2$ be non-flat minimal surfaces immersed in $\R^{n+1}$ with the Gauss maps $G_1,G_2$ into $V$, respectively. Assume that there are conformal diffeomorphisms $\Phi_i$ of $S_1$ onto $S_2$. Let $f^1=G_1,f^2=G_2\circ\Phi$.  Let $Q_1,\ldots,Q_q$ be $q$ hypersurfaces of $\P^n(\C)$ in $N-$subgeneral position with respect to $V$, $d=\mathrm{lcm}(\deg Q_1,\ldots,\deg Q_q)$ and let $k$ be a positive integer such that: 
\begin{itemize}
\item[(a)]  $(f^1)^{-1}(Q_j)=(f^2)^{-1}(Q_j)$ for every $j\in\{1,\ldots,q\},$
\item[(b)] $\bigcap_{j=0}^k(f^1)^{-1}(Q_{i_j})=\o$ for every $1\le i_0<\cdots<i_k\le q$,
\item[(c)] $f^1=f^2$ on $\bigcup_{j=1}^q(f^1)^{-1}(Q_j)$.
\end{itemize}
Suppose that $f^1$ is linear nondegenerate over $I_d(V)$. If $S^1$ is complete and
$$q>\frac{2N-\ell+1}{\ell+1}\left (M+1+\dfrac{2(\sigma_{M}-\sigma_{M-\min\{k,\ell\}})}{d}+\frac{M(M+1)}{2d}\right)$$
where $M=H_d(V)-1$, $\sigma_p=\frac{p(p+1)}{2}$ for every $p\ge 0$ and $\sigma_p=0$ for every $p\le 0$,
then $f^1\equiv f^2$.
\end{theorem}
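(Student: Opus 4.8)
The plan is to argue by contradiction. I would assume $f^1\wedge\cdots\wedge f^m\not\equiv 0$ and, exploiting this, manufacture on the complete surface $S_1$ a conformal pseudo-metric whose Gaussian curvature is bounded above by a strictly negative constant; the completeness of $S_1$ together with Fujimoto's lemma (no complete open minimal surface can carry a conformal pseudo-metric of the type produced here) then yields the contradiction. Since all $\Phi_i$ are conformal, I may pull every map back to the single Riemann surface $S_1$ and work throughout with reduced representations $F^i=(f^i_0,\ldots,f^i_n)$ and the induced metric $ds^2$ on $S_1$.

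First I would normalize the data. Replacing each $Q_j$ by its $(d/\deg Q_j)$-th power, with $d=\mathrm{lcm}(\deg Q_1,\ldots,\deg Q_q)$, does not alter the inverse images in (a)--(c), so I may assume all hypersurfaces have the common degree $d$. Fixing a basis of $I_d(V_{d,f^1})$ I pass to the associated embedding $\Phi_d$; because each $f^i$ is nondegenerate over $I_d(V_{d,f^1})$, the composites $\tilde f^i=\Phi_d\circ f^i$ are \emph{linearly nondegenerate} holomorphic curves into $\P^M(\C)$ with $M=H_d(V_{d,f^1})-1$, and each $Q_j$ pulls back to a hyperplane $\tilde Q_j$. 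The hypotheses then read: the $\tilde Q_j$ sit in $N$-subgeneral position relative to the $\ell$-dimensional variety $V_{d,f^1}$, the divisors $\nu_{Q_j(f^1)},\ldots,\nu_{Q_j(f^m)}$ agree for each $j$ by (a), the divisors $\nu_{Q_{i_0}(f^1)},\ldots,\nu_{Q_{i_k}(f^1)}$ have no common zero by (b), and $F^1$ and $F^2$ coincide projectively on $\bigcup_j\{Q_j(F^1)=0\}$ by (c). Attaching the Nochka-type weights to the $N$-subgeneral position with respect to $V_{d,f^1}$ is what will eventually supply the outer factor $\tfrac{2N-\ell+1}{\ell+1}$.

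The technical heart is the auxiliary function. Following the improved construction of \cite{Q19}, I would assemble a function $\Psi$ from the quantities $Q_j(F^i)$ together with the generalized Wronskians of the $\tilde f^i$, designed so that the projective coincidence $F^1=F^2$ on the common zero set, combined with the shared inverse images of (a), forces $\Psi$ to vanish to high order along $\bigcup_j\{Q_j(F^1)=0\}$, while the empty-$(k{+}1)$-fold-intersection condition (b) truncates the relevant counting at level $\min\{k,\ell\}$. Estimating the zero divisor of $\Psi$ by the sharper method of \cite{Q19} is precisely what produces the weight $(2m-1)\sigma_M-m\,\sigma_{M-\min\{k,\ell\}}$ appearing in the threshold. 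This is the step I expect to be the main obstacle: obtaining the sharp lower bound for $\nu_\Psi$ in the simultaneous presence of $m$ maps, subgeneral position, and truncation, and then matching it against the Nochka weights so that the resulting numerical requirement collapses to exactly the stated inequality on $q$.

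Finally I would assemble the pseudo-metric. Using $\Psi$ together with the factors $\prod_j|Q_j(F^i)|$ and the Wronskian terms, I build a conformal metric on $S_1$ minus a discrete set, of the shape
$$ d\tau^2=|\Psi|^{2/\rho}\Big(\textstyle\prod_j|Q_j(F)|\cdots\Big)^{-1}\,ds^2, $$
where $\rho$ is fixed from the assumed bound. A curvature computation of Fujimoto type, legitimate because each $\tilde f^i$ is nondegenerate so the logarithmic-derivative and Second-Main-Theorem estimates apply, shows that the exponent delivered by
$$ q>\frac{2N-\ell+1}{\ell+1}\Big(M+1+\frac{(2m-1)\sigma_M-m\,\sigma_{M-\min\{k,\ell\}}}{d(m-1)}\Big) $$
renders the Gaussian curvature of $d\tau^2$ bounded above by a negative constant, while comparison with $ds^2$ shows $d\tau^2$ is complete. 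Invoking the completeness of $S_1$ and Fujimoto's lemma, this is impossible, so $\Psi\equiv 0$; unwinding the construction gives $f^1\wedge\cdots\wedge f^m\equiv 0$.
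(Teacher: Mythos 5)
Your overall architecture --- contradiction, an auxiliary holomorphic function, a negatively curved pseudo-metric, completeness of $S_1$ --- is indeed the paper's (the paper packages the metric/completeness machinery once and for all as Theorem \ref{thm3.5}). But the step you yourself flag as ``the main obstacle'' and leave open is the entire content of the proof, and the construction you gesture at is the wrong one for this theorem. A function assembled from the quantities $Q_j(F^i)$ in the style of \cite{Q19} is the device the paper uses for Theorem \ref{1.2}, where the conclusion concerns the ratios $Q_i(F^1)/Q_i(F^2)$; the vanishing of such a function has no bearing on the wedge $f^1\wedge\cdots\wedge f^m$, so even if your curvature argument closed, ``unwinding'' it would not yield $f^1\wedge\cdots\wedge f^m\equiv 0$. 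What the paper takes instead is a minor of the coordinate matrix itself: if $f^1\wedge\cdots\wedge f^m\not\equiv 0$, there are indices $0\le t_1<\cdots<t_m\le n$ with $h:=\det\bigl(f^i_{t_j}\bigr)_{1\le i,j\le m}\not\equiv 0$, and this specific $h$ has the three properties that produce the stated constant: (i) at every point of $\bigcup_{j}(f^1)^{-1}(Q_j)$, conditions (a) and (c) make the $m$ rows of the matrix pairwise proportional (for $m>2$ this is how (c) must be read, i.e.\ all the maps agree there), so row reduction shows $h$ vanishes to order at least $m-1$, giving $\nu_h\ge (m-1)\nu^{[1]}_{\prod_{i,j}Q_j(F^i)}$; (ii) $|h|\le\prod_{i=1}^m\|F^i\|$, so the exponent $\rho$ of Theorem \ref{thm3.5} equals $1$; (iii) summing the truncated Wronskian estimate of Theorem \ref{thm2.4} over $i$ and absorbing the truncation term into $\nu_h$ via (i) gives
$$ \frac{m\bigl(\sigma_M-\sigma_{M-\min\{k,\ell\}}\bigr)}{m-1}\,\nu_h+\sum_{i=1}^m\nu_{F^i_M}\ \ge\ \sum_{i=1}^m\sum_{j=1}^{q}\omega_j\,\nu_{Q_j(F^i)}, $$
which is exactly the hypothesis of Theorem \ref{thm3.5} with $\lambda=\frac{m(\sigma_M-\sigma_{M-\min\{k,\ell\}})}{m-1}$. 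That theorem then yields $q\le \frac{2N-\ell+1}{\ell+1}\bigl(M+1+\frac{\lambda}{d}+\frac{M(M+1)}{2d}\bigr)$, and the identity $\frac{\lambda}{d}+\frac{\sigma_M}{d}=\frac{(2m-1)\sigma_M-m\sigma_{M-\min\{k,\ell\}}}{d(m-1)}$ is precisely how the threshold in the statement arises. None of this --- the choice of $h$, the order-$(m-1)$ vanishing, the norm bound, the bookkeeping against the Nochka weights $\omega_j$ --- is recoverable from your sketch, and with your cross-product-type $\Psi$ the numbers would not come out to this constant.

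Two further points you treat as automatic are not. First, nondegeneracy: $f^1$ is nondegenerate over $I_d(V_{d,f^1})$ by definition of $V_{d,f^1}$, but for $i\ge 2$ the inclusion $f^i(S_1)\subset V_{d,f^1}$ and the nondegeneracy of $f^i$ over $I_d(V_{d,f^1})$ require a separate argument; the paper obtains them by running the proof of Lemma \ref{lem4.1}, which is itself an application of Theorem \ref{thm3.5} with the auxiliary function $h=Q(F^1)$ vanishing on $\bigcup_j(f^1)^{-1}(Q_j)$ by (c). Second, there is no off-the-shelf ``Fujimoto lemma'' saying a complete surface carries no negatively curved conformal pseudo-metric (pull-backs of the Poincar\'e metric are counterexamples); the actual contradiction in the proof of Theorem \ref{thm3.5} comes from building a \emph{flat, complete} metric on the complement of the zero set of $h\prod_{i,j,p}\psi(F^i)_{jp}$, invoking Huber's and Osserman's finiteness theorems, and playing the finite total curvature of the pseudo-metric against the Yau--Karp infinite-area criterion for the complete metric $ds^2=|\xi|^{2/m}\prod_i\|F^i\|^{2/m}|dz^2|$. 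So your final step must be routed through a statement with the precision of Theorem \ref{thm3.5}, whose hypotheses are exactly what the minor $h$ supplies.
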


\textit{Remark 1:} If $V$ is the smallest linear subspace of $\P^n(\C)$ containing $f^1(S)$ and $Q_1,\ldots,Q_q$ are hyperplanes of $\P^n(\C)$ in general position, then $V=\P^\ell(\C)\subset\P^n(\C)$, $d=1$, $N=n,M=\ell$. Therefore, from Theorem \ref{1.1}, $f^1=f^2$ if
$$ q>\frac{2n-\ell+1}{\ell+1}\left (\ell+1+\frac{3\ell(\ell+1)}{2}\right)=\frac{(2n-\ell+1)(3\ell+2)}{2}.$$
This condition is always fulfilled if $q>\frac{(n+1)(3n+2)}{2}=(n+1)^2+\frac{n(n+1)}{2}$ (without any condition on $f^1(S)$).
Then this theorem give an improvement for Theorem A(1).

\begin{theorem}\label{1.2} 
Let $V$ be an $\ell-$dimension projective subvariety of $\P^n(\C)$. Let $S_1,S_2$ be non-flat minimal surfaces in $\R^{n+1}$ with the Gauss maps $G_1,G_2$ into $V$, respectively. Assume that there are conformal diffeomorphisms $\Phi$ of $S_1$ onto $S_2$. Let $f^1=G_1,f^2=G_2\circ\Phi$.  Let $Q_1,\ldots,Q_q$ be $q$ hypersurfaces  (not containing $V$) of $\P^n(\C)$ in $N-$subgeneral position with respect to $V$, $d=\mathrm{lcm}(\deg Q_1,\ldots,\deg Q_q)$  and let $k$ be a positive integer such that: 
\begin{itemize}
\item[(a)]  $(f^1)^{-1}(Q_j)=(f^2)^{-1}(Q_j)$ for every $j\in\{1,\ldots,q\},$
\item[(b)] $\bigcap_{j=0}^k(f^1)^{-1}(Q_{i_j})=\o$ for every $1\le i_0<\cdots<i_k\le q$,
\item[(c)] $f^1=f^2$ on $\bigcup_{j=1}^q(f^1)^{-1}(Q_j)$.
\end{itemize}
If $f^1$ is nondegenerate over $I_d(V)$, $S^1$ is complete, $q\ge 2Mk+2k$ and
$$q>\frac{2N-\ell+1}{\ell+1}\left (M+1+\frac{2Mkq}{(q+2(M-1)k)d}+\frac{M(M+1)}{2d}\right)$$
then there is $\left[\frac{q}{2}\right]$ indices $i_1,\ldots,i_{[q/2]}\in\{1,\ldots,q\}$ such that
$$ \dfrac{Q_{i_1}(F^1)}{Q_{i_1}(F^2)}=\cdots=\dfrac{Q_{i_{[q/2]}}(F^1)}{Q_{i_{[q/2]}}(F^2)}$$
for any two representations $F^1,F^2$ of $f^1,f^1$, respectively.
\end{theorem}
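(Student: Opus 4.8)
The plan is to transfer the problem to a linear (hyperplane) uniqueness problem on a projective space of dimension $M=H_d(V_{d,f^1})-1$ and then run a Fujimoto-type argument in which completeness of $S^1$ forbids the existence of a conformal pseudo-metric of strictly negative curvature. First I would replace $V$ by $V_{d,f^1}$ (so that $f^1$ is nondegenerate over $I_d(V_{d,f^1})$, which is the hypothesis) and compose $f^1,f^2$ with the natural map attached to $I_d(V_{d,f^1})$, i.e.\ the degree-$d$ Veronese-type embedding $\Psi$ sending $[x]\mapsto\big([x^I]\big)_{I\in\mathcal T_d}$ restricted to the variety. This turns each hypersurface $Q_j$ into a hyperplane $\tilde Q_j$ of $\P^M(\C)$ and the maps $\tilde f^i=\Psi\circ f^i$ into holomorphic curves into a linear $\P^M(\C)$; moreover $\tilde f^1$ is linearly nondegenerate and the $N$-subgeneral position of the $Q_j$ with respect to $V$ becomes the corresponding subgeneral position of the $\tilde Q_j$, whose Nochka weight is governed by the factor $\frac{2N-\ell+1}{\ell+1}$ appearing in the statement. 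I would set $h_j:=Q_j(F^1)/Q_j(F^2)$; conditions (a) and (c) guarantee that the supports of $\nu_{Q_j(f^1)}$ and $\nu_{Q_j(f^2)}$ coincide and that $f^1=f^2$ there, so each $h_j$ is holomorphic and nowhere zero on $S^1$ off the common zero set $(f^1)^{-1}(Q_j)$, and the sought conclusion is exactly that at least $[q/2]$ of the $h_j$ coincide.

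Next I would argue by contradiction: assuming that no $[q/2]$ of the ratios are equal, I would group the indices according to the relation $h_i\equiv h_j$, obtaining classes each of size $<[q/2]$. Since no class then contains as many as half of the indices, a standard matching argument lets me pair up the hypersurfaces into $[q/2]$ pairs so that within each pair the two ratios differ. For each such pair $(i,j)$ the function $Q_i(F^1)Q_j(F^2)-Q_j(F^1)Q_i(F^2)$ is a nonzero holomorphic function whose zero divisor absorbs the common part of the $Q_\bullet(f^1)$-divisors. Following the auxiliary-function method of \cite{Q19,Q21}, I would assemble these differences together with a generalized Wronskian of $\tilde f^1$ into a single holomorphic auxiliary function $\Phi$, and then produce the essential lower bound for $\nu_\Phi$ in terms of the pulled-back divisors $\nu_{Q_j(f^1)}$. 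The optimal part of this estimate uses condition (b): since at most $k$ of the $Q_j$ meet simultaneously, the truncation level is $t=\min\{k,\ell\}$, and the refined counting of \cite{Q19} yields precisely the coefficients $2Mt-t(t-1)$, $\tfrac{M(M+1)}{2d}$ and the correction exponent $\theta$ recorded in the theorem.

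I would then convert this divisor estimate into geometry by building a conformal pseudo-metric of the shape
$$ d\tau^2=\Big(|\Phi|\prod_{j}|Q_j(F^1)|^{a}|Q_j(F^2)|^{b}\Big)^{2\rho}\,|dz|^2 $$
with exponents $a,b,\rho>0$ chosen so that $\rho$ can be taken strictly less than $1$ exactly when $q$ exceeds the stated bound. A direct computation of the Gaussian curvature of $d\tau^2$, using that $\Phi$ is holomorphic and that the $h_j$ are holomorphic and zero-free off the exceptional set, shows that away from the singular set the curvature is bounded above by a negative constant. Comparing $d\tau^2$ with the (complete) induced metric $ds^2$ of the minimal surface through the Weierstrass data, one finds $ds^2\lesssim d\tau^2$ up to a constant, so a generalized Schwarz--Ahlfors lemma bounds $d\tau^2$, hence $ds^2$, along divergent curves. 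This produces a divergent curve of finite length, contradicting the completeness of $S^1$; the contradiction forces at least $[q/2]$ of the $h_j$ to be identical, which is the assertion (and it holds for arbitrary representations, since replacing $F^1,F^2$ by other reduced representations multiplies every $h_j$ by the same nowhere-zero factor).

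The hard part will be the divisor estimate for $\Phi$ in the subgeneral-position setting: one must simultaneously incorporate the Nochka weights forced by $N$-subgeneral position with respect to $V_{d,f^1}$, the truncation $t=\min\{k,\ell\}$ coming from condition (b), and the optimized bookkeeping of \cite{Q19}, so as to land on the precise quantity $\theta$ rather than a weaker estimate. Getting the exponents $a,b,\rho$ to be admissible (in particular $\rho<1$) under the displayed inequality for $q$---and checking that the curvature of $d\tau^2$ stays below a fixed negative constant uniformly---is the delicate calculation on which the sharpness of the bound depends.
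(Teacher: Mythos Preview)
Your overall architecture---argue by contradiction, build an auxiliary holomorphic function from the $2\times 2$ determinants $Q_i(F^1)Q_j(F^2)-Q_j(F^1)Q_i(F^2)$, estimate its zero divisor, and feed this into the negative-curvature/completeness machine (packaged here as Theorem~\ref{thm3.5})---agrees with the paper. The substantive discrepancy is in the combinatorial step, and it is not harmless. You propose to match the indices into $[q/2]$ disjoint pairs; the paper instead applies Dirac's theorem to the graph on $\{1,\dots,q\}$ whose edges are the pairs $\{i,j\}$ with $Q_i(F^1)Q_j(F^2)\not\equiv Q_j(F^1)Q_i(F^2)$ and extracts a \emph{Hamilton cycle} $i_1,\dots,i_q,i_1$. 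The auxiliary function $h$ is the product of all $q$ consecutive edge-determinants, so that every index occurs in exactly two factors.

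This doubling is precisely what produces the constant $\theta$. At a point $a$ where $Q_1,\dots,Q_t$ vanish one obtains
\[
\nu_h(a)\ \ge\ 2\sum_{i=1}^{t}\min\{\nu_{Q_i(F^1)}(a),\nu_{Q_i(F^2)}(a)\}+(q-2t),
\]
and the coefficient $2$ is then split as $(2-\lambda)+\lambda$ with $\lambda=\min\bigl\{2,\,q/(Mt-\tfrac{t^2}{2}+\tfrac{3t}{2}+[\tfrac{t^2}{4}])\bigr\}$; optimizing this against the elementary bound $\sum_{i=1}^t\max\{M-v(i),M-w(i)\}\le Mt-\tfrac{t^2}{2}-\tfrac{t}{2}+[\tfrac{t^2}{4}]$ is exactly what manufactures $\theta$. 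A matching gives each index only once, so the $(2-\lambda)+\lambda$ splitting is unavailable and the resulting inequality on $q$ is strictly weaker than the one in the statement; your appeal to \cite{Q19} for ``precisely the coefficients'' therefore does not go through with pairs. Replace the matching by the Hamilton-cycle construction; the divisor estimate, together with $|h|\le(\|F^1\|\,\|F^2\|)^{dq}$, then plugs directly into Theorem~\ref{thm3.5}, and no separate pseudo-metric/curvature computation is needed. (Also, since $f^1$ is assumed nondegenerate over $I_d(V)$ one has $V_{d,f^1}=V$, so the passage to $V_{d,f^1}$ in your first paragraph is superfluous.)
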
 
\textit{Remark 2:} In the above theorem, suppose that $V=\P^n(\C)$, $Q_1,\ldots,Q_q$ are hyperplanes of $\P^n(\C)$ in general position. Then $d=1$, $M=N=\ell=n$. Therefore, from the above theorem, $f^1=f^2$ if $q\ge 2nk+2k$ and
$$q>n+1+\frac{2nkq}{q+2nk-2k}+\frac{n(n+1)}{2}.$$
Therefore, this result implies the previous result of J. Park and M. Ru in \cite{RP}.

\section{Main lemmas}

Let $V$ be $\ell$-dimension subvariety of $\P^n(\C)$. Let $d$ be a positive integer.  Throughout this section and Section 3, we fix a $\C$-ordered basis $\mathcal V=([v_0],\ldots,[v_M])$ of $I_d(V)$, where $v_i\in H_d$ and $M=H_V(d)-1$. 

 Let $S$ be an open Riemann surface and let $z$ is a conformal coordinate. Let $f$ be a holomorphic map of $S$ into $V$, which is nondegenerate over $I_d(V)$.  
Suppose that $F=(f_0,\ldots,f_n)$ is a reduced representation of $f$. 
We set
$$ F=(v_0(F),\ldots,v_M(F))$$
and 
$$F_{p}:= F^{(0)} \wedge F^{(1)} \wedge \cdots \wedge F^{(p)} : S\rightarrow \bigwedge_{p+1}\C^{M+1}$$
for $0\le p\le M$, where 
\begin{itemize}
\item $F^{(0)}:=F=(v_0(F),\ldots,v_M(F))$,
\item $F^{(l)}=F^{(l)}:=\left (v_0(F)^{(l)},\ldots, v_M(F)^{(l)}\right)$ for each $l=0, 1,\ldots , p$,
\item $v_i(F)^{(l)} \ (i =0,\ldots, M)$ is the $l^{th}$- derivatives of $v_i(F)$ taken with respect to $z$.
\end{itemize}
The norm of $F_{p}$ is given by
$$|F_{p}|:=\left (\sum_{0\le i_0<i_1<\cdots<i_p\le M}\left |W(v_{i_0}(F),\ldots,v_{i_p}(F))\right|^2\right)^{1/2}, $$
where 
$$W(v_{i_0}(F),\ldots,v_{i_p}(F)):=\det\left (v_{i_j}(F)^{(l)}\right)_{0\le l,j\le p}.$$ 
% We have some fundamental properties of the wronskian of holomorphic function $h_0,h_1,\ldots,h_p$ as follows:
% \begin{itemize}
% \item $W_{\xi}(h_0,\ldots,h_p)=W_z(h_0,\ldots,h_p) (\xi'_z)^{\frac{p(p+1)}{2}}$,
% \item $W_z(hh_0,\ldots,hh_p)=h^{p+1}W_z(h_0,\ldots,h_p)$,
% \item $h_0,\ldots,h_p$ are $\C$-linearly dependent if and only if $W_z(h_0,\ldots,h_p)\equiv 0$.
% \end{itemize}

Denote by $\langle,\rangle$ the canonical hermitian product on $\bigwedge^{k+1}\C^{M+1}\ (0\le k\le M)$. For two vectors $A\in \bigwedge^{k+1}\C^{M+1}\ (0\le k\le M)$ and $B\in\bigwedge^{p+1}\C^{M+1}\ (0\le p\le k)$, there is one and only one vector $C\in\bigwedge^{k-p}\C^{M+1}$ satisfying 
$$ \langle C,D\rangle=\langle A,B\wedge D\rangle\ \forall D\in \bigwedge^{k-p}\C^{M+1}.$$
The vector $C$ is called the interior product of $A$ and $B$, and denoted by $A\vee B$.

Now, for a hypersurface $Q$ of degree $d$ in $\P^n(\C)$, we have
$$[Q]=\sum_{i=0}^Ma_i[v_i].$$
Hence, we associate $Q$ with the vector $(a_0,\ldots,a_M)\in\C^{M+1}$ and define $F_{p}(Q)=F_{p}\vee H$. Then, we may see that
\begin{align*}
F_{0}(Q)&=a_0v_0(F)+\cdots+a_Mv_M(F)=Q(F),\\ 
|F_{p}(Q)|&=\left (\sum_{0\le i_1<\cdots<i_p\le M}\sum_{l\ne i_1,\ldots,i_p}a_l\left |W(v_l(F),v_{i_1}(F),\ldots,v_{i_p}(F))\right|^2\right)^{1/2}.
\end{align*}
For $0\le p\le M$, the $p^{th}$-contact function of $f$ for $Q$ is defined by
$$\varphi_{p}(Q):=\dfrac{|F_{p}(Q)|^2}{|F_{p}|^2}.$$

% For each $p\ (0\le p\le M-1)$, let $M_p=\binom{M+1}{p+1}-1$ and $\pi_p$ be the canonical projection from $\bigwedge^{p+1}\C^{M+1}\sim\C^{M_p+1}$ onto $\P^{M_p}(\C)$. Denote by $\Omega_p$ the pull-back of the Fubini-Study form on $\P^{M_p}(\C)$ by the map $\pi\circ F_{p}$, i.e., $\Omega_p = \ddc\log |F_{p}|^2$. 
% \begin{proposition}[{cf. \cite[Proposition 2.5.1]{Fu93}}]\label{pro3.1}
% Under the above notation, for each positive $\epsilon$ there exists a constant $\delta_0(\epsilon)$, depending only on $\epsilon$, such that for any hypersurface $Q$ of degree $d$ in $\P^n(\C)$ and any constant $\delta>\delta_0(\epsilon)$
% $$ \ddc\log\dfrac{1}{\log(\delta/\varphi_{p}(Q))}\ge\dfrac{\varphi_{p}(Q)}{\varphi_{p+1}(Q)\log(\delta/\varphi_{p}(Q))}\Omega_p-\epsilon\Omega_p.$$
% \end{proposition}
% Actually, \cite[Proposition 2.5.1]{Fu93} states only for hyperplanes. However, by embedding $V$ into $\P^M(\C)$, we automatically get the above proposition.

\begin{lemma}[{cf. \cite[Lemma 3]{QA}}]\label{lem2.1}
Let $Q_1,...,Q_q$ be $q\ (q>2N-\ell+1)$ hypersurfaces of $\P^n(\C)$ in $N$-subgeneral position with respect to $V$ of the same degree $d$. Then, there are positive rational constants $\omega_i\ (1\le i\le q)$ satisfying the following:

i) $0<\omega_i \le 1\  \forall i\in\{1,...,q\}$,

ii) Setting $\tilde \omega =\max_{j\in Q}\omega_j$, one gets $\sum_{j=1}^{q}\omega_j=\tilde \omega (q-2N+\ell-1)+\ell+1.$

iii) $\dfrac{\ell+1}{2N-\ell+1}\le \tilde\omega\le\dfrac{\ell}{N}.$

iv) For each $R\subset \{1,...,q\}$ with $\sharp R = N+1$, then $\sum_{i\in R}\omega_i\le \ell+1$.

v) Let $E_i\ge 1\ (1\le i \le q)$ be arbitrarily given numbers. For each $R\subset \{1,...,q\}$ with $\sharp R = N+1$,  there is a subset $R^o\subset R$ such that $\sharp R^o=\rank \{[Q_i]\}_{i\in R^o}=\ell+1$ and 
$$\prod_{i\in R}E_i^{\omega_i}\le\prod_{i\in R^o}E_i.$$
\end{lemma}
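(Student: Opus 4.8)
The plan is to transfer the statement from hypersurfaces on $V$ to a purely matroid-theoretic fact about the associated coordinate vectors in $I_d(V)$, where the classical Nochka-weight machinery applies. First I would associate to each hypersurface $Q_i$ its class $[Q_i]\in I_d(V)\cong\C^{M+1}$ (using the fixed basis $\mathcal V$ and the normalization $\|Q_i\|=1$), and introduce the truncated rank function $\rho(T):=\min\{\rank\{[Q_i]\}_{i\in T},\,\ell+1\}$ on subsets $T\subseteq\{1,\ldots,q\}$. Since the truncation of a linear matroid is again a matroid, $\rho$ is monotone and submodular with maximal value $\ell+1$, and $q>2N-\ell+1\ge N+1$ guarantees that the full ground set already has $\rho$-rank $\ell+1$.

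The decisive geometric input is the translation of $N$-subgeneral position into a rank condition, which I would establish as follows: if $V\cap\bigcap_{i\in T}Q_i=\varnothing$, then $\rank\{[Q_i]\}_{i\in T}\ge\ell+1$. Indeed, a point of $V$ annihilates every $Q_i$ ($i\in T$) if and only if it annihilates their whole span $W=\langle[Q_i]\rangle_{i\in T}\subseteq I_d(V)$, so $W$ is a base-point-free linear subsystem of $\O_V(d)$. Were $\dim W=s\le\ell$, a basis of $W$ would give $s$ hypersurface sections whose common zero locus on the $\ell$-dimensional variety $V$ has every component of dimension $\ge\ell-s\ge0$ by the projective dimension theorem, hence is nonempty, contradicting base-point-freeness; therefore $s\ge\ell+1$. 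Applying this to any $R$ with $\sharp R=N+1$ yields $\rho(R)=\ell+1$, so $(\{1,\ldots,q\},\rho)$ is a rank-$(\ell+1)$ matroid in which every $(N+1)$-subset spans --- precisely the combinatorial hypothesis of the Nochka construction with parameters $(n,N)=(\ell,N)$.

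With this reduction in hand I would produce the weights $\omega_i$ and the constant $\tilde\omega$ by the Nochka balancing argument: form a maximal nested chain of subsets optimizing the ratio of $\rho$-rank to cardinality, and define the $\omega_i$ from the rank increments along that chain. The rationality and the bound $0<\omega_i\le1$ in (i), the additive identity $\sum_j\omega_j=\tilde\omega(q-2N+\ell-1)+\ell+1$ in (ii), the two-sided estimate $\frac{\ell+1}{2N-\ell+1}\le\tilde\omega\le\frac{\ell}{N}$ in (iii), and the subset bound $\sum_{i\in R}\omega_i\le\ell+1$ for $\sharp R=N+1$ in (iv) are then the standard conclusions of that construction, verified by tracking the chain. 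Finally, for (v), given $R$ with $\sharp R=N+1$ and arbitrary $E_i\ge1$, I would order the indices by decreasing $E_i$ and greedily extract a $\rho$-independent subset $R^o\subseteq R$ of size $\ell+1$ (possible since $\rho(R)=\ell+1$), so that $\rank\{[Q_i]\}_{i\in R^o}=\ell+1$; the inequality $\prod_{i\in R}E_i^{\omega_i}\le\prod_{i\in R^o}E_i$ then follows from (i) and (iv) together with the matroid-exchange property, exactly as in the classical deduction.

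I expect the main obstacle to be the combinatorial heart of the third step --- Nochka's simultaneous balancing of the normalization (ii), the uniform subset bound (iv), and the sharp two-sided estimate (iii) for $\tilde\omega$ --- which is delicate precisely because all of these constraints must be met at once by a single system of weights. By contrast, the matroid translation via the base-point-free dimension estimate and the greedy extraction needed for (v) are comparatively routine once that estimate is in place.
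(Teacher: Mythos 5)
The first thing to note is that the paper does not prove Lemma \ref{lem2.1} at all: it is quoted from the cited reference \cite[Lemma 3]{QA}, so your proposal can only be compared with the standard proof in that literature. That proof has exactly the two-step structure you describe: (1) translate $N$-subgeneral position with respect to $V$ into a rank condition on the classes $[Q_i]\in I_d(V)$, and (2) run the Nochka-weight machinery on the resulting configuration with parameters $(\ell,N)$. Your step (1) is correct and complete: a point of $V$ annihilates all $Q_i$, $i\in T$, iff it annihilates the span $W$ of the $[Q_i]$, and if $\dim W=s\le\ell$ the projective dimension theorem forces a common zero on the $\ell$-dimensional variety $V$; hence $\rank\{[Q_i]\}_{i\in T}\ge\ell+1$ whenever $V\cap\bigcap_{i\in T}Q_i=\varnothing$. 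This is the genuinely geometric content of the lemma, and you have it right.

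There are, however, two real gaps in steps (2)--(3). First, classical Nochka theory is stated for vector configurations in which every $(N+1)$-subset spans the ambient space; your configuration sits in $I_d(V)\cong\C^{M+1}$ with $M$ possibly much larger than $\ell$, and the rank-$(\ell+1)$ truncation of this linear matroid is not literally such a configuration, so you cannot simply cite the classical theorem ``with parameters $(\ell,N)$''. You must either verify that the entire balancing construction uses only matroid axioms (a nontrivial check, not a citable fact), or -- much cleaner -- compose with a generic linear projection $\pi:I_d(V)\to\C^{\ell+1}$, for which $\rank\,\pi(\{[Q_i]\}_{i\in T})=\min\{\rank\{[Q_i]\}_{i\in T},\ell+1\}$ simultaneously for all (finitely many) subsets $T$; this lands you exactly in the classical setting and transfers all conclusions back. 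Second, and more seriously, your claim that (v) follows from (i) and (iv) is false. Take $\ell=2$, $N=3$, $q=6$, with $[Q_1]=[Q_2]$ and $[Q_3],\ldots,[Q_6]$ generic, and weights $\omega_1=\omega_2=\tfrac{2}{3}$, $\omega_3=\cdots=\omega_6=\tfrac{7}{12}$: these satisfy (i)--(iv) (here $\tilde\omega=\tfrac23$, $\sum_j\omega_j=\tfrac{11}{3}=\tilde\omega\cdot 1+3$, and every $4$-subset has weight sum at most $\tfrac52\le 3$), yet for $R=\{1,2,3,4\}$ with $E_1=E_2=E$ large and $E_3=E_4=1$ one gets $\prod_{i\in R}E_i^{\omega_i}=E^{4/3}$, while any admissible $R^o$ contains at most one of the indices $1,2$ and so $\prod_{i\in R^o}E_i=E$. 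What (v) actually requires is the stronger property, delivered by the Nochka construction but absent from (i)--(iv), that $\sum_{i\in T}\omega_i\le\rank\{[Q_i]\}_{i\in T}$ for \emph{every} subset $T$ with $\sharp T\le N+1$; with that property the greedy extraction plus Abel summation works, but without it your deduction collapses. A smaller point of the same kind: the bound $\tilde\omega\le\ell/N$ in (iii) is the one coming from Fujimoto's version of the construction; the commonly cited bound $(\ell+1)/(N+1)$ is weaker, so you must invoke (or reprove) the sharper variant rather than ``the standard conclusions''.
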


The following theorem is due to the author in recent works \cite{Q21,Q23,Q24}. 
% \begin{theorem}[{cf. \cite[Theorem 3.2]{Q}}]\label{thm3.2}
% Let $S,V,d,\mathcal V$ and $M$ be as above. Let $f:S\rightarrow V\subset\P^n(\C)$ be a holomorphic curve and let $Q_1,\ldots, Q_q$ be hypersurfaces of $\P^n(\C)$ in $N$-subgeneral position with respect to $V$ of the same degree $d$ such that $V\not\subset Q_j\ (1\le j\le q)$, where $q>\frac{(2N-k+1)(M+1)}{k+1}$. Assume that $f$ is nondegenerate over $I_d(V)$ and have a local reduced representation $F=(f_0,\ldots,f_n)$ on a local holomorphic chat $(U,z)$. Let $\omega_j\ (1\le j\le q)$ be the Nochka weights for these hypersurfaces (defined in Lemma \ref{lem2.1}). For an arbitrarily given $\delta >1$ and $0\le p\le M-1$, we set
% $$\Phi_{jp}:=\dfrac{\varphi_{p+1}(Q_j)}{\varphi_{p}(Q_j)\log^2\left (\delta/\varphi_{p}(Q_j)\right)}.$$
% Then, there exists a positive constant $C_{p}$ depending only in $\mathcal V,p$ and $Q_j\ (1\le j\le q)$ such that
% $$ \sum_{j=1}^q\omega_j\Phi_{jp}\ge C_{p}\left (\prod_{j=1}^q\Phi_{jp}^{\omega_j}\right)^{1/(M-p)} $$
% holds on $S-\bigcup_{1\le j\le q}\{z;\varphi_{p}(Q_j)(z)=0\}$.
% \end{theorem}
% 
\begin{theorem}[{cf. \cite[Theorem 3.3]{Q21},\cite[Theorem 3.5]{Q23},\cite[Theorem 2.7]{Q24}}]\label{thm2.2}
Let the notations be as above and let $\tilde\omega$ be the constant defined in the Lemma \ref{lem2.1} with respect to the hypersurfaces $Q_1,\ldots,Q_q$. Then, for every $\epsilon>0$, there exist a positive number $\delta\ (>1)$ and $C$, depending only on $\epsilon$ and $Q_j$ such that
\begin{align*}
\ddc&\log\dfrac{\prod_{p=0}^{M-1}|F_{p}|^{2\epsilon}}{\prod_{1\le j\le q,0\le p\le M-1}\log^{2\omega_j}\left(\delta/\varphi_{p}(Q_j)\right)}\\
&\ge C\left (\dfrac{|F_{0}|^{2\left (\tilde\omega(q-(2N-k+1))-M+k\right)}|F_{M}|^2}{\prod_{j=1}^q(|F_{0}(Q_j)|^2\prod_{p=0}^{M-1}\log^2(\delta/\varphi_{p}(Q_j)))^{\omega_j}}\right)^{\frac{2}{M(M+1)}}\ddc|z|^2.
\end{align*}
\end{theorem}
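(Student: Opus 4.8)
The plan is to turn the asserted inequality into a pointwise lower bound for the coefficient of $\ddc|z|^2$ and to produce that bound by combining the infinitesimal Pl\"ucker formulae for the associated curves $F_{0},\dots,F_{M}$ with a logarithmic-convexity estimate for the contact terms and the arithmetic-geometric mean inequality. First I would expand the left-hand side as
\begin{align*}
\ddc\log\frac{\prod_{p=0}^{M-1}|F_{p}|^{2\epsilon}}{\prod_{j,p}\log^{2\omega_j}(\delta/\varphi_{p}(Q_j))}
&=\epsilon\sum_{p=0}^{M-1}\ddc\log|F_{p}|^2\\
&\quad-2\sum_{j=1}^{q}\sum_{p=0}^{M-1}\omega_j\,\ddc\log\log\frac{\delta}{\varphi_{p}(Q_j)}.
\end{align*}
Since $\varphi_{p}(Q_j)\le 1$ (by Cauchy--Schwarz for the interior product, as $\|Q_j\|=1$), choosing $\delta>1$ makes every $\log(\delta/\varphi_{p}(Q_j))\ge\log\delta>0$, so all expressions are well defined. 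The first sum is nonnegative: the infinitesimal Pl\"ucker formula $\ddc\log|F_{p}|^2=\bigl(|F_{p-1}|^2|F_{p+1}|^2/|F_{p}|^4\bigr)\ddc|z|^2$ (with the convention $|F_{-1}|\equiv 1$) exhibits each summand as a nonnegative multiple of $\ddc|z|^2$, and the telescoping of the product of these $M$ ratios is what will ultimately supply the factor $|F_{M}|^2$ together with the bulk of the $|F_{0}|$-power in the target.

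The technical heart is the estimate of the contact terms. Here I would invoke the Fujimoto-type logarithmic-convexity lemma: for a suitable $\delta>1$, depending only on $\epsilon$ and the $Q_j$, one bounds $-\ddc\log\log(\delta/\varphi_{p}(Q_j))$ from below, the crucial point being that the singular contribution coming from the tangency locus $\{\varphi_{p}(Q_j)=0\}$ is dominated, up to a $\ddc$ of a bounded function, by a nonnegative multiple of $\ddc|z|^2$; this is precisely the step that manufactures the constant $C$ and forces $\delta>1$. Summing these bounds with the weights $\omega_j$ and using that $F_{p}(Q_j)=F_{p}\vee Q_j$ is holomorphic, so that $\ddc\log|F_{p}(Q_j)|^2\ge 0$, assembles the denominator $\prod_{j}\bigl(|F_{0}(Q_j)|^2\prod_{p}\log^2(\delta/\varphi_{p}(Q_j))\bigr)^{\omega_j}$.

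To pass from the $N$-subgeneral position of the $Q_j$ to the effective general-position count I would apply Lemma~\ref{lem2.1}. At a fixed point only the $Q_j$ for which $\varphi_{0}(Q_j)$ is small matter, and part (v), applied with the $E_i$ taken to be the relevant contact quantities, replaces the $\omega_j$-weighted product over any $(N+1)$-subset by an honest product over a rank-$(\ell+1)$ subset on which the general-position contact estimate is available; part (ii), $\sum_{j}\omega_j=\tilde\omega(q-2N+\ell-1)+\ell+1$, then converts the raw hypersurface count into the exponent carried by $|F_{0}|$. Finally I would apply the arithmetic-geometric mean inequality across the flag $F_{0},\dots,F_{M-1}$, counting the $p$-th level with its natural multiplicity so that the total weight is $M(M+1)/2$; this produces the normalizing exponent $2/(M(M+1))$ and, after telescoping the Pl\"ucker ratios, lets one read off $|F_{M}|^2$, the stated power of $|F_{0}|$, and the above denominator, giving the claimed bound with a uniform $C$.

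I expect the main obstacle to be the contact-function convexity estimate together with the exact bookkeeping of the $|F_{0}|$-power: one must choose $\delta$ and $C$ uniformly while controlling $-\ddc\log\log(\delta/\varphi_{p}(Q_j))$ near the zeros of $\varphi_{p}(Q_j)$, and simultaneously track how the weighted sum $\sum_j\omega_j$ interacts with the telescoped Pl\"ucker product so that the exponent of $|F_{0}|$ emerges as precisely $2\bigl(\tilde\omega(q-(2N-k+1))-M+k\bigr)$ rather than merely an inequality of the same order. This is exactly the delicate analysis carried out in \cite{Q21,Q23,Q24}, whose scheme I would follow.
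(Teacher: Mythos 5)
A preliminary remark on the comparison itself: the paper offers no proof of Theorem \ref{thm2.2} --- it is imported verbatim from \cite{Q21,Q23,Q24} (``The following theorem is due to the author in recent works\dots''), so the only meaningful question is whether your proposal would stand as a self-contained proof. It would not, and the decisive gap is the one you concede yourself: the entire analytic content of the theorem sits in the ``Fujimoto-type logarithmic-convexity lemma'' that you invoke but neither state precisely nor prove, namely the lower bound for $-\ddc\log\log\left(\delta/\varphi_{p}(Q_j)\right)$ near the zeros of $\varphi_{p}(Q_j)$, uniform in a suitable $\delta>1$, which is exactly what manufactures the constant $C$ and the admissible $\delta$. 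Deferring that step to \cite{Q21,Q23,Q24} is circular, since those references prove precisely the theorem under review; a blind proof would have to redo it (in Fujimoto's setting this is the analogue of his contact-function estimate for associated curves, and its extension from hyperplanes to classes $[Q_j]\in I_d(V)$ is the genuinely new work of the cited papers).

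Beyond this, two steps you describe would not go through as written. First, your claim that telescoping the Pl\"ucker ratios supplies ``the bulk of the $|F_{0}|$-power'' is incorrect: with the convention $|F_{-1}|\equiv 1$, the telescoped product $\prod_{p=0}^{M-1}|F_{p-1}|^2|F_{p+1}|^2/|F_{p}|^4$ equals $|F_{M}|^2/\bigl(|F_{0}|^2|F_{M-1}|^2\bigr)$, which contributes a \emph{negative} power of $|F_{0}|$. The large positive exponent $2\bigl(\tilde\omega(q-(2N-k+1))-M+k\bigr)$ arises instead from rewriting $|F_{0}(Q_j)|^2=\varphi_{0}(Q_j)|F_{0}|^2$ in the denominator and summing the Nochka-type weights via Lemma \ref{lem2.1}(ii), so the bookkeeping in your outline would not close to the stated exponent. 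Second, Lemma \ref{lem2.1}(v) is a statement about numbers $E_i\ge 1$, and the rank-$(\ell+1)$ subset $R^o$ it produces depends on those numbers, hence on the point $z$ when the $E_i$ are contact quantities; to use it you must check the normalization $E_i\ge 1$ (which constrains $\delta$, e.g.\ $\delta\ge e$) and then convert a pointwise inequality, with varying $R^o$, into an inequality between the smooth densities on both sides --- typically by restricting attention at each point to the at most $N$ hypersurfaces with small $\varphi_{0}(Q_j)$ and taking maxima over the finitely many admissible subsets. None of this appears in the proposal, so as it stands it is a plausible roadmap of the cited proofs rather than a proof.
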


\begin{theorem}[{cf. \cite[Proposition 2.5.7]{Fu93}}]\label{thm2.3}
Set $\tau_m=\sum_{p=1}^m\sigma_m$ for each integer $m$. We have
$$ \ddc\log(|F_{0}|^2\cdots |F_{M-1}|^2)\ge\dfrac{\tau_M}{\sigma_M}\left(\dfrac{|F_{0}|^2\cdots |F_{M}|^2}{|F_{0}|^{2\sigma_{M+1}}}\right)^{1/\tau_M}\ddc|z|^2. $$
\end{theorem}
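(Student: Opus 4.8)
The plan is to reduce the stated inequality of $(1,1)$-forms to a single pointwise inequality for the coefficient of $\ddc|z|^2$, and then to obtain that pointwise inequality from a weighted arithmetic--geometric mean inequality with a carefully chosen weight system. Writing $u_p=|F_{p}|^2$ and adopting the convention $u_{-1}=1$, I would first invoke the infinitesimal Plücker (curvature) formula for the associated curves $F_{p}=F^{(0)}\wedge\cdots\wedge F^{(p)}$, namely
$$\ddc\log u_p=\frac{u_{p-1}\,u_{p+1}}{u_p^2}\,\ddc|z|^2\qquad(0\le p\le M-1),$$
which for $p=0$ is just the Lagrange identity $|F^{(0)}\wedge F^{(1)}|^2=|F_{0}|^2|F^{(1)}|^2-|\langle F_{0},F^{(1)}\rangle|^2$ applied to $\partial\bar\partial\log|F_{0}|^2$, and in general is the computation underlying \cite[Proposition 2.5.7]{Fu93}. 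Summing over $p=0,\ldots,M-1$ gives $\ddc\log(u_0\cdots u_{M-1})=\big(\sum_{p=0}^{M-1}x_p\big)\ddc|z|^2$ with $x_p:=u_{p-1}u_{p+1}/u_p^2\ge 0$, so the whole problem collapses to bounding the scalar sum $\sum_{p=0}^{M-1}x_p$ from below.

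Next I would introduce the weights $\lambda_p:=\sigma_{M-p}/\tau_M$ for $0\le p\le M-1$. Because $\sum_{p=0}^{M-1}\sigma_{M-p}=\sum_{k=1}^{M}\sigma_k=\tau_M$ these satisfy $\sum_p\lambda_p=1$, and since $\sigma$ is increasing they are maximized at $p=0$, with $\lambda_0=\sigma_M/\tau_M$. The algebraic engine is the fact that the second difference of $\sigma$ is constant, $\sigma_{k+1}-2\sigma_k+\sigma_{k-1}=1$, which holds because $\sigma_k=k(k+1)/2$ is a quadratic with leading coefficient $1/2$. Using this I would compute the exponent of each $u_j$ in $\prod_{p=0}^{M-1}x_p^{\lambda_p}$: for $1\le j\le M-1$ it equals $\lambda_{j-1}-2\lambda_j+\lambda_{j+1}=1/\tau_M$, for $j=M$ it equals $\lambda_{M-1}=\sigma_1/\tau_M=1/\tau_M$, and for $j=0$ it equals $-2\lambda_0+\lambda_1=(\sigma_{M-1}-2\sigma_M)/\tau_M=(1-\sigma_{M+1})/\tau_M$, again by the second-difference identity. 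Hence the geometric mean is exactly
$$\prod_{p=0}^{M-1}x_p^{\lambda_p}=\left(\frac{u_0\cdots u_M}{u_0^{\sigma_{M+1}}}\right)^{1/\tau_M}.$$

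Finally I would apply weighted AM--GM in the form $\prod_p x_p^{\lambda_p}\le\sum_p\lambda_p x_p$ and then estimate $\sum_p\lambda_p x_p\le(\max_p\lambda_p)\sum_p x_p=(\sigma_M/\tau_M)\sum_p x_p$, which rearranges to $\sum_p x_p\ge(\tau_M/\sigma_M)\prod_p x_p^{\lambda_p}$; substituting the product computed above delivers precisely the claimed lower bound, and the appearance of the constant $\tau_M/\sigma_M$ is explained by $1/\lambda_0=\tau_M/\sigma_M$. The only genuinely delicate point is the exponent bookkeeping at the boundary indices $j=0$ and $j=M$ (and implicitly $j=M-1$, where one needs $\lambda_M=\sigma_0/\tau_M=0$): one must use the conventions $u_{-1}=1$ and $\sigma_p=0$ for $p\le 0$ so that no spurious factor of $u_{-1}$ or $u_{M+1}$ survives and so that the second-difference identity applies uniformly. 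Everything else is the Plücker identity plus the elementary observation that a discrete quadratic has constant second difference, so I expect the heart of the argument to be the verification that this single weight family $\lambda_p=\sigma_{M-p}/\tau_M$ simultaneously normalizes to $\sum_p\lambda_p=1$, reproduces the exponent pattern of the target product, and yields the sharp constant $\tau_M/\sigma_M$.
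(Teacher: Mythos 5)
Your proof is correct: the Plücker formula, the weight system $\lambda_p=\sigma_{M-p}/\tau_M$, the second-difference bookkeeping giving exponent $1/\tau_M$ on each $|F_j|^2$ ($j\ge 1$) and $(1-\sigma_{M+1})/\tau_M$ on $|F_0|^2$, and the weighted AM--GM step with $\max_p\lambda_p=\sigma_M/\tau_M$ all check out. The paper itself gives no proof (it cites Fujimoto, Proposition 2.5.7), and your argument is essentially that standard cited proof, so there is nothing further to reconcile.
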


\begin{theorem}\label{thm2.4}
Let the notations be as above and let the assumption be as in Lemma \ref{lem2.1}, we have
$$\nu_{F^1_M}\ge \sum_{j=1}^q\omega_j\nu_{Q_j(F)}-(\sigma_M-\sigma_{M-\min\{k,\ell\}})\nu^{[1]}_{\prod_{j=1}^qQ_j(F)}.$$

% b) In the case where $V=\P^n(\C)$ and $d=1$, we have
%  $$\nu_{F^1_n}\ge \sum_{j=1}^q\omega_j\nu_{Q_j(F)}-(\sigma_n-\sigma_{n-k})\nu^{[1]}_{\prod_{j=1}^qQ_j(F)}.$$
\end{theorem}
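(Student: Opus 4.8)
The goal is a divisor inequality comparing the vanishing divisor $\nu_{F^1_M}$ of the Wronskian-type map $F_M = F^{(0)}\wedge\cdots\wedge F^{(M)}$ against the weighted sum of the pullback divisors $\nu_{Q_j(F)}$, with a truncated correction term. The plan is to work locally at an arbitrary point $z_0\in S$ and verify the inequality for the coefficients $\ord_{z_0}$ of both sides. Since the claimed inequality is purely local in nature, it suffices to fix $z_0$, set $t=\min\{k,\ell\}$, and show
\[
\ord_{z_0}(F_M)\ \ge\ \sum_{j=1}^q\omega_j\,\ord_{z_0}\bigl(Q_j(F)\bigr)-(\sigma_M-\sigma_{M-t})\min\Bigl\{1,\sum_{j=1}^q\ord_{z_0}\bigl(Q_j(F)\bigr)\Bigr\},
\]
the last factor being the truncated-to-one counting function $\nu^{[1]}$.

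The key combinatorial input is condition (b): because $\bigcap_{j=0}^k(f^1)^{-1}(Q_{i_j})=\varnothing$ for every choice of $k+1$ indices, at any given point $z_0$ at most $k$ of the hypersurfaces pass through $f(z_0)$. Using the subgeneral position and part (v) of Lemma \ref{lem2.1} with the weights $E_i$, I would first reduce the weighted sum $\sum_j\omega_j\,\ord_{z_0}Q_j(F)$ to a sum over a subset $R^o$ of size $\rank\{[Q_i]\}=\ell+1$, so that only linearly independent defining sections (inside $I_d(V)$) contribute. Among those, at most $t=\min\{k,\ell\}$ actually vanish at $z_0$. The heart of the argument is then a lower bound for the order of vanishing of the generalized Wronskian $W(v_{i_0}(F),\ldots,v_{i_M}(F))$ in terms of the orders of the individual contact functions: if $s$ of the basis sections vanish to various orders at $z_0$, the Wronskian picks up the sum of those orders minus the standard $\binom{s}{2}=\sigma_{s-1}$ defect coming from the derivatives consumed by differentiation. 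Quantifying this gives the factor $\sigma_M-\sigma_{M-t}$ on the correction term, since replacing a generic frame by one in which $t$ sections vanish shifts the Wronskian order by exactly $\sigma_M-\sigma_{M-t}$.

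The main obstacle is the careful bookkeeping of derivative orders in the Wronskian under the constraint that only $t$ sections vanish simultaneously. Concretely, I expect to choose a local holomorphic frame adapted to the filtration $F_0\subset F_1\subset\cdots\subset F_M$ so that the vanishing of $Q_j(F)=F_0(Q_j)$ propagates through the successive contact functions $\varphi_p(Q_j)$, and then read off the order of $F_M$ from the leading term. The delicate point is that $\nu_{Q_j(F)}$ may be large at $z_0$ while the truncation $\nu^{[1]}$ caps the correction at one: the inequality must therefore distribute the full weighted orders to the positive side while subtracting only the truncated multiplicity. I would handle this by separating the contribution of each vanishing $Q_j$ into the part absorbed by $F_M$ (the full order, up to linear-independence via Lemma \ref{lem2.1}(v)) and the part lost to differentiation (bounded by $\sigma_M-\sigma_{M-t}$ independently of the actual multiplicities, hence truncated). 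This mirrors the standard Wronskian lemma of Fujimoto but with the subgeneral-position weights and the $k$-fold intersection hypothesis doing the work of bounding the simultaneous vanishing by $t$.
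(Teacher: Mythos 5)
Your proposal follows essentially the same route as the paper's proof: fix a point $a$, use $N$-subgeneral position to choose $R$ with $\sharp R=N+1$ containing every index with $Q_j(F)(a)=0$, apply Lemma \ref{lem2.1}(v) (with $E_j=e^{\nu_{Q_j(F)}(a)}$) to replace the weighted sum $\sum_j\omega_j\nu_{Q_j(F)}(a)$ by an unweighted sum over a subset $R^o$ of $\ell+1$ linearly independent classes, observe that hypothesis (b) together with this independence leaves at most $t=\min\{k,\ell\}$ vanishing forms among them, and then bound the loss in the Wronskian by a constant per point, which is precisely what produces the truncated divisor $\nu^{[1]}_{\prod_{j}Q_j(F)}$. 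This is the paper's argument.

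However, one intermediate assertion in your outline is wrong, even though you immediately override it with the correct constant: the Wronskian defect is not ``$\binom{s}{2}=\sigma_{s-1}$''. That would be the loss for the $s\times s$ Wronskian of the vanishing sections alone. Here $F_M$ is the full $(M+1)\times(M+1)$ Wronskian of a basis of $I_d(V)$ (after completing $Q_1,\ldots,Q_t$ to such a basis, which changes $F_M$ only by a nonzero constant factor), so in the determinant expansion a vanishing column may be hit by derivatives of order up to $M$; the worst case assigns the orders $M,M-1,\ldots,M-t+1$ to the $t$ vanishing columns, and the resulting loss is $\sum_{j=0}^{t-1}(M-j)=\sigma_M-\sigma_{M-t}$. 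This is exactly the paper's estimate $\nu_{F_M}(a)\ge\min_\alpha\sum_{j=1}^{t}\max\{0,\nu_{Q_j(F)}(a)-(M-\alpha(j))\}$, the minimum over bijections $\alpha:\{1,\ldots,t\}\rightarrow\{0,\ldots,t-1\}$. A two-function example shows $\binom{s}{2}$ cannot be right: with $M=1$, $g_0=1+z$, $g_1=z^5$, only $s=1$ function vanishes, yet $W(g_0,g_1)=5z^4+4z^5$ has order $4=5-(\sigma_1-\sigma_0)$ rather than $5-\binom{1}{2}=5$. Relatedly, the shift is ``at most'', not ``exactly'', $\sigma_M-\sigma_{M-t}$. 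With these corrections your outline coincides with the paper's proof.
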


\begin{proof}
For a point $a\in\bigcup_{j=1}^q(f^1)^{-1}(Q_j)$, since $\{Q_j\}_{j=1}^q$ is in $N$-subgeneral position with respect to $V$, there are at most $N$ indices $j$ such that $Q_j(F^1)(a)=0$. Then, there is a subset $R\subset\{1,\ldots,q\}$ with $\sharp R=N+1$ such that $Q_j(F^1)(a)\ne 0\ \forall j\not\in R$. Applying Lemma \ref{lem2.1}, there exists a subset $R^0\subset R$ with $\sharp R^o=\ell+1$ such that $\rank_{\C}\{[Q_j];j\in R^o\}=\ell+1$ and
$$ \sum_{j=1}^q\omega_j\nu_{Q_j(F)}(a)=\sum_{j\in R}\omega_j\nu_{Q_j(F^1)}(a)\le\sum_{j\in R^o}\nu_{Q_j(F^1)}(a).$$
We set $k'=\min\{k,\ell\}$. Since there are at most $k'$ indices $j\in R^o$ such that $Q_j(F^1)(a)=0$, we also may assume further that $R^o=\{1,\ldots,\ell+1\}$, $Q_j(F^1)(a)\ne 0$ for all $j> k',j\in R^o$. By the basis property of the wronskian, we have
$$\nu_{F^1_M}(a)\ge \min_\alpha\left\{\sum_{j=1}^{k'}\max\{0,\nu_{Q_j(f^{1})}(a)-(M-\alpha(j))\}\right\}\ge \sum_{j=1}^{k'}\nu_{Q_j(f^{1})}(a)-(\sigma_M-\sigma_{M-k'}),$$
where the minimum is taken over all bijections $\alpha:\{1,\ldots,k'\}\rightarrow\{0,\ldots,k'-1\}$. Thus
$$\nu_{F^1_M}\ge \sum_{j=1}^q\omega_j\nu_{Q_j(F)}-(\sigma_M-\sigma_{M-\min\{k,\ell\}})\nu^{[1]}_{\prod_{j=1}^qQ_j(F)}.$$
The theorem is proved.
% In the case where $V=\P^n(\C)$ and $d=1$, we set $k''=\min\{k,n\}$. For each point $a\in\bigcup_{j=1}^q(f^1)^{-1}(Q_j)$, we see that there are at most $k"$ indice $j_1,\ldots,j_{k"}$ such that $Q_{j_1}(F),\ldots, Q_{j_{k"}}$ are linear independent and $Q_{j_t}(F)(a)=0$. Hence, by repeatting the above arguments with $k"$ in place of $k'$, we get
%  $$\nu_{F^1_M}\ge \sum_{j=1}^q\omega_j\nu_{Q_j(F)}-(\sigma_M-\sigma_{M-\min\{k,n\}})\nu^{[1]}_{\prod_{j=1}^qQ_j(F)}.$$
\end{proof}

\begin{lemma}[{Generalized Schwarz's Lemma \cite{A38}}]\label{lem2.5}  
Let $v$ be a non-negative real-valued continuous subharmonic function on $\Delta (R)=\{z\in\C; |z|<R\}$. If $v$ satisfies the inequality $\Delta\log v\ge v^2$ in the sense of distribution, then
$$v(z) \le \dfrac{2R}{R^2-|z|^2}.$$
\end{lemma}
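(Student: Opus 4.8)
The plan is to prove this by the classical comparison argument with the Poincaré metric together with an exhaustion of $\Delta(R)$ by concentric subdisks; this is the standard route to the Ahlfors--Schwarz lemma. The geometric content is that the hypothesis $\Delta\log v\ge v^2$ says the (possibly degenerate) conformal metric $v^2|dz|^2$ has Gaussian curvature $\le -1$, and the conclusion bounds it above by the complete hyperbolic metric of curvature $-1$ on $\Delta(R)$, whose density is exactly $w_R(z)=\frac{2R}{R^2-|z|^2}$.

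First I would record, by a direct computation with $\Delta=4\,\partial_z\partial_{\bar z}$, that the comparison density $w_R$ is itself an extremal solution of the curvature equation, namely $\Delta\log w_R=w_R^2$, and that $w_R(z)\to +\infty$ as $|z|\to R$. Next, fixing $r\in(0,R)$, I would work on the smaller disk $\Delta(r)$ with comparison density $w_r$ and study the quotient $u:=v/w_r$. Since $v$ is continuous on $\Delta(R)$ (hence bounded on every compact subset of $\Delta(r)$) while $w_r(z)\to+\infty$ as $|z|\to r$, the function $u$ extends continuously to $\overline{\Delta(r)}$ with $u\equiv 0$ on the boundary circle; consequently its supremum $m=\sup_{\Delta(r)}u$ is attained at an interior point $z_0$.

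The heart of the argument is to show $m\le 1$. If $m=0$ then $v\equiv 0$ and there is nothing to prove, so assume $m>0$ and suppose for contradiction that $m>1$. On a neighborhood of $z_0$ we then have $u>1$, i.e.\ $v>w_r>0$; there $\log v$ is well defined and by hypothesis $\Delta\log v\ge v^2$ holds in the distributional sense. Hence $\psi:=\log u=\log v-\log w_r$ satisfies $\Delta\psi\ge v^2-w_r^2>0$ near $z_0$, so $\psi$ is strictly subharmonic there. A strictly subharmonic function cannot attain an interior local maximum, which contradicts the fact that $z_0$ is a maximum point of $u$ (equivalently of $\psi$). Therefore $m\le 1$, i.e.\ $v\le w_r$ on $\Delta(r)$, and letting $r\uparrow R$ gives $v(z)\le\frac{2R}{R^2-|z|^2}$ for each fixed $z\in\Delta(R)$.

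The main obstacle is the low regularity of $v$: it is only continuous and subharmonic, and $\log v$ is undefined on the zero set $\{v=0\}$, so the naive second-derivative test at the maximum is unavailable. I would circumvent this exactly as above by confining the maximum-principle step to the open set $\{v>w_r\}$ containing $z_0$, where $\log v$ is legitimate and the distributional inequality already forces the \emph{strict} subharmonicity $\Delta\psi>0$; strict subharmonicity rules out an interior maximum without requiring any smoothness of $v$. A secondary technical point is to justify that the supremum of $u$ is genuinely interior, which follows from the continuity of $v$ on the larger disk $\Delta(R)$ together with the boundary blow-up of $w_r$; once these two points are handled, the comparison and the limit $r\uparrow R$ are routine.
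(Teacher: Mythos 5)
Your proof is correct. Note that the paper itself offers no proof of this lemma: it is quoted verbatim from Ahlfors' 1938 paper (the citation \cite{A38}), and your argument is precisely the classical Ahlfors--Schwarz comparison proof --- the extremal density $w_r$ on exhausting subdisks, the boundary blow-up forcing an interior maximum of $v/w_r$, and the maximum principle on the set $\{v>w_r\}$ where $\log v$ is legitimate. The only step you leave implicit is why a continuous function $\psi$ with distributional inequality $\Delta\psi\ge c>0$ near $z_0$ cannot have a local maximum there; this is standard (the function $\psi(z)-\tfrac{c}{4}|z-z_0|^2$ has nonnegative distributional Laplacian, hence is subharmonic, and the sub-mean-value inequality at $z_0$ gives $\psi(z_0)\le\psi(z_0)-\tfrac{c}{4}\rho^2$, a contradiction), so it is a presentation gap rather than a mathematical one.
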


\section{Holomorphic curves from complex discs into projective varieties}

\begin{lemma}\label{lem3.1} 
Let $V$ be an $\ell$-dimension projective subvariety of $\P^n(\C)$. Let $Q_1,\ldots,Q_q$ be $q$ hypersurfaces of $\P^n(\C)$ in $N-$subgeneral position with respect to $V$ and let $d$ be the least common multiple of $\deg Q_1,\ldots,\deg Q_q$. Let $f^1,\ldots,f^m$ be $m$ holomorphic maps from $\Delta (R)$ into $V\ (1\le m\le n+1)$, which are nondegenerate over $I_d(V)$. Assume that there exists a holomorphic function $h$ on $\Delta (R)$ satisfying
$$ \lambda\nu_h+\sum_{i=1}^m\nu_{F^i_M}\ge \sum_{i=1}^m\sum_{j=1}^q\omega_j\nu_{Q_j(F^i)}\text{ and }|h|\le\prod_{i=1}^m\|F^i\|^\rho,$$
where $F^i=(F^i_0,\ldots,F^i_n)$ is a reduced representation of $f^i\ (1\le i\le m)$, $\lambda$ and $\rho$ are non-negative numbers. Then for an arbitrarily given $\epsilon$ satisfying 
$$\gamma=\sum_{j=1}^q\omega_j -M-1-\frac{\lambda\rho}{d}>\epsilon\left(\sigma_{M+1}+\frac{\rho}{d}\right).$$ 
the pseudo-metric $d\tau^2=\eta^{2/m}|dz|^2$, where
$$ \eta=\left( |h|^{\lambda+\epsilon}\prod_{i=1}^m\dfrac{|F^i_{0}|^{\gamma-\epsilon(\sigma_{M+1}+\frac{\rho}{d})}|F^i_{M}|\prod_{p=0}^{M}|F^i_{p}|^{\epsilon}}{\prod_{j=1}^q\left (|Q_j(F^i)|\cdot\prod_{p=1}^{M-1}\log (\delta^i/\varphi^i_{p}(Q_j))\right)^{\omega_j}}\right )^{\frac{1}{\sigma_M+\epsilon\tau_M}}$$
and $\delta^i$ is the number satisfying the conclusion of Theorem \ref{thm2.2} with respect to the map $f^i$, is continuous and has strictly negative curvature.
\end{lemma}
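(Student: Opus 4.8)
The plan is to prove the two assertions of the statement separately: first that $\eta$ extends to a continuous non‑negative function on $\Delta(R)$, and then that the pseudo‑metric $d\tau^2=\eta^{2/m}|dz|^2$ has Gaussian curvature bounded above by a negative constant, the latter being verified on the open set where $\eta>0$ and extended across a discrete exceptional set by continuity.

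\emph{Continuity.} The only possible singularities of $\eta$ are poles, and I would rule these out by a divisor comparison inside the outer exponent $1/(\sigma_M+\epsilon\tau_M)$. Choosing each $\delta^i$ so large that $\log(\delta^i/\varphi^i_p(Q_j))\ge 1$ (possible since the contact functions are bounded because $\|Q_j\|=1$), the logarithmic factors in the denominator stay bounded below by a positive constant; they can only force $\eta\to0$ and never produce a pole. Hence poles could come only from the zeros of the factors $|Q_j(F^i)|^{\omega_j}$. At any point the order of the numerator is at least
$$(\lambda+\epsilon)\nu_h+\sum_{i=1}^m\nu_{F^i_M}+\Big(\gamma-\epsilon\big(\sigma_{M+1}+\tfrac{\rho}{d}\big)\Big)\sum_{i=1}^m\nu_{F^i_0}+\epsilon\sum_{i=1}^m\sum_{p=0}^M\nu_{F^i_p},$$
whereas that of the denominator is $\sum_{i=1}^m\sum_{j=1}^q\omega_j\nu_{Q_j(F^i)}$. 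Since $\gamma-\epsilon(\sigma_{M+1}+\rho/d)>0$ by the standing assumption on $\epsilon$, every $F^i_p$-term is non‑negative, and the assumed divisor inequality $\lambda\nu_h+\sum_i\nu_{F^i_M}\ge\sum_i\sum_j\omega_j\nu_{Q_j(F^i)}$ shows the numerator dominates the denominator. Thus $\eta$ is pole‑free and continuous.

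\emph{Negative curvature.} On $\Omega=\{\eta>0\}$, away from the discrete set where $h$, the $Q_j(F^i)$ or the $F^i_p$ vanish or some $\varphi^i_p(Q_j)=0$, the density is positive and smooth and I would compute
$$\ddc\log\eta^{1/m}=\frac{1}{m(\sigma_M+\epsilon\tau_M)}\Big[(\lambda+\epsilon)\ddc\log|h|+\sum_{i=1}^m\ddc\log\Psi^i\Big],$$
where $\Psi^i$ is the $i$‑th factor of $\eta$. On $\Omega$ one has $\ddc\log|h|=0$ and $\ddc\log|Q_j(F^i)|=0$, while $\ddc\log|F^i_0|^2\ge0$ and $\ddc\log|F^i_M|^2\ge0$ because these are norms of holomorphic frames. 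The heart of the estimate is to split each $\log\Psi^i$ into the combination $\epsilon\sum_{p}\log|F^i_p|-\sum_j\omega_j\sum_p\log\log(\delta^i/\varphi^i_p(Q_j))$, to which Theorem \ref{thm2.2} applies, and the combination $\log(|F^i_0|^2\cdots|F^i_{M-1}|^2)$, to which Theorem \ref{thm2.3} applies, the remaining $\log|F^i_0|$ and $\log|F^i_M|$ contributing non‑negatively. The weights $\omega_j$ and $\tilde\omega$ are supplied by Lemma \ref{lem2.1}.

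\emph{Combination and main obstacle.} I would then merge the two right‑hand sides by the weighted arithmetic–geometric mean inequality with weights $\sigma_M$ and $\epsilon\tau_M$, which is exactly what produces the single normalising exponent $1/(\sigma_M+\epsilon\tau_M)$; summing over $i$ and applying the ordinary arithmetic–geometric mean inequality in the $m$ variables turns $\tfrac1m\sum_i$ into a product. To reconcile the outcome with $\eta^{2/m}$, the bound $|h|\le\prod_i\|F^i\|^\rho$ enters decisively: since $I_d(V)$ is base‑point‑free on $V$ (at each point of $V$ some $x_i^d$ survives in $I_d(V)$), the quantity $|F^i_0|$ is comparable to $\|F^i\|^d$, so one may estimate $\eta^{2/m}$ from above by replacing $|h|^{\lambda+\epsilon}$ with $\prod_i|F^i_0|^{(\lambda+\epsilon)\rho/d}$. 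The new powers of $|F^i_0|$ are precisely the ones the differential inequalities account for, which is why the exponents in $\eta$ carry the corrections $-\lambda\rho/d$ inside $\gamma$ and $-\epsilon\rho/d$ in the exponent of $|F^i_0|$. This gives $\ddc\log\eta^{1/m}\ge c\,\eta^{2/m}\ddc|z|^2$ on $\Omega$ for some $c>0$, i.e. strictly negative curvature, and continuity propagates the inequality across the exceptional set. I expect the genuine obstacle to be exactly this exponent bookkeeping: verifying that, after invoking Theorems \ref{thm2.2}–\ref{thm2.3} with the weights of Lemma \ref{lem2.1} and absorbing $|h|$, the powers of $|F^i_0|$, $|F^i_M|$, $|Q_j(F^i)|$ and the logarithmic factors recombine into $c\,\eta^{2/m}$ with the single exponent $1/(\sigma_M+\epsilon\tau_M)$, the hypothesis $\gamma>\epsilon(\sigma_{M+1}+\rho/d)$ being what keeps the decisive coefficient positive throughout.
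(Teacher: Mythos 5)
Your proposal follows essentially the same route as the paper's proof: continuity is obtained by the same divisor comparison (the hypothesis $\lambda\nu_h+\sum_i\nu_{F^i_M}\ge\sum_{i,j}\omega_j\nu_{Q_j(F^i)}$ plus non-negativity of the extra exponents gives $\nu_\eta\ge 0$), and negative curvature by feeding Lemma \ref{lem2.1}'s weights into Theorems \ref{thm2.2} and \ref{thm2.3}, merging the resulting bounds into the single exponent $1/(\sigma_M+\epsilon\tau_M)$ (the paper does this via H\"older's inequality, your weighted AM--GM is the same step), and absorbing $|h|$ through $|h|\le\prod_i\|F^i\|^\rho\le C\prod_i|F^i_0|^{\rho/d}$. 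The exponent bookkeeping you flag as the remaining obstacle is exactly the paper's computation (\ref{3.2}), and your setup makes it go through.
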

Here and throughout this paper, $F^i_p$ and $\varphi^i_{p}$ are defined with respect to the map $f^i$. For simplicity, we sometimes write $\prod_{i,j}$ and $\prod_{j,p}$ for $\prod_{i=1}^m\prod_{j=1}^{q}$ and $\prod_{j=1}^q\prod_{p=1}^{M-1}$, respectively.
\begin{proof}
We see that the function $\eta$ is continuous at every point $z$ with $\prod_{i,j}Q_j(F^i)(z)\ne 0$. For a point $z_0\in \Delta(R)$ such that $\prod_{i,j}Q_j(F^i)(z_0)= 0$, we have
$$\nu_{\eta}(z_0)\ge \frac{1}{\sigma_M+\epsilon\tau_M}\left (\lambda\nu_{h}(z_0)+\sum_{i=1}^m\nu_{F^i_{M}}(z_0)-\sum_{i=1}^m\sum_{j=1}^q\omega_j\nu_{Q_j(F^i)}(z_0)\right)\ge 0.$$
This implies that $d\tau^2$ is a continuous pseudo-metric on $\Delta(R)$. 

We now prove that $d\tau^2$ has strictly negative curvature on $\Delta (R)$. Again, we have
$$\sum_{i=1}^m\ddc\log\dfrac{|F^i_{M}|^{1+\epsilon}}{\prod_{j=1}^q|Q_j(F)|^{\omega_j}}+(\lambda+\epsilon) \ddc\log |h|\ge 0.$$
Let $\Omega$ be the Fubini-Study form of $\P^n(\C)$ and denote by $\Omega_{f^i}$ the pull-back of $\Omega$ by the map $f^i\ (1\le i\le m)$. By Theorems \ref{thm2.2} and \ref{thm2.3}, we have
\begin{align}\label{3.2}
\begin{split}
&\ddc\log\eta^{1/m}\ge\dfrac{\gamma-\epsilon(\sigma_{M+1}+\frac{\rho}{d})}{m(\sigma_M+\epsilon\tau_M)}d\sum_{i=1}^m\Omega_{f^i}\\
&+\dfrac{\epsilon}{4m(\sigma_M+\epsilon\tau_M)}\sum_{i=1}^m\ddc\log\left(|F^i_{0}|^2\cdots|F^i_{M-1}|^2\right)\\
& +\dfrac{1}{2m(\sigma_M+\epsilon\tau_M)}\sum_{i=1}^m\ddc\log\dfrac{\prod_{p=0}^{M-1}|F^i_{p}|^{2(\frac{\epsilon}{2})}}{\prod_{p=0}^{M-1}\log^{2\omega_j}(\delta^i/\varphi^i_{p}(Q_j))}\\
&\ge\dfrac{\epsilon\tau_M}{4m\sigma_M(\sigma_M+\epsilon\tau_M)}\sum_{i=1}^m\left(\dfrac{|F^i_{0}|^2\cdots |F^i_{M}|^2}{|F^i_{0}|^{2\sigma_{M+1}}}\right)^{\frac{1}{\tau_M}}\ddc|z|^2\\
&+C_0\sum_{i=1}^m\left (\dfrac{|F^i_{0}|^{2\left(\tilde\omega (q-2N+k-1)-M+k\right)}|F^i_{M}|^2}{\prod_{j=1}^q(|Q_j(F^i)|^2\prod_{p=0}^{M-1}\log^2(\delta^i/\varphi^i_{p}(Q_j)))^{\omega_j}}\right)^{\frac{1}{\sigma_M}}\ddc|z|^2\\
&\ge\dfrac{\epsilon\tau_M}{4\sigma_M(\sigma_M+\epsilon\tau_M)}\left(\prod_{i=1}^m\dfrac{|F^i_{0}|^2\cdots |F^i_{M}|^2}{|F^i_{0}|^{2\sigma_{M+1}}}\right)^{\frac{1}{m\tau_M}}\ddc|z|^2\\
&+mC_0\left (\prod_{i=1}^m\dfrac{|F^i_{0}|^{2\left(\tilde\omega (q-2N+k-1)-M+k\right)}|F^i_{M}|^2}{\prod_{j=1}^q(|Q_j(F^i)|^2\prod_{p=0}^{M-1}\log^2(\delta^i/\varphi^i_{p}(Q_j)))^{\omega_j}}\right)^{\frac{1}{m\sigma_M}}\ddc|z|^2\\
&\ge C_1\left (\prod_{i=1}^m\dfrac{|F^i_{0}|^{\tilde\omega (q-2N+k-1)-M+k-\epsilon\sigma_{M+1}}|F^i_{M}|\prod_{p=0}^M|F^i_{p}|^\epsilon}{\prod_{j=1}^q\left(|Q_j(F^i)|\prod_{p=0}^{M-1}\log(\delta^i/\varphi^i_{p}(Q_j))\right)^{\omega_j}}\right)^{\frac{2}{m(\sigma_M+\epsilon\tau_M)}}\ddc|z|^2
\end{split}
\end{align}
for some positive constant $C_0,C_1$, where the last inequality comes from H\"{o}lder's inequality.
On the other hand, we have $|h|\le \prod_{i=1}^m\|F^i\|^{\rho}\le \prod_{i=1}^m|F^i_0|^{\frac{\rho}{d}}$ and
$$\prod_{i=1}^m|F^i_{0}|^{\tilde\omega (q-2N+k-1)-M+k-\epsilon\sigma_{M+1}}\ge |h|^{\lambda+\epsilon}\prod_{i=1}^m|F^i_{0}|^{\gamma-\epsilon(\sigma_{M+1}+\frac{\rho}{d})}.$$
This implies that $\Delta \log\eta^{2/m}\ge C_2\eta^{2/m}$ for some positive constant $C_2$. Therefore, $d\tau^2$ has strictly negative curvature.
\end{proof}

\begin{lemma}\label{lem3.3}
% Let $V$ be a projective subvariety of $\P^n(\C)$, $Q_1,\ldots,Q_q$ be $q$ hypersurfaces of $\P^n(\C)$ in $N-$subgeneral position with respect to $V$ and $d=\mathrm{lcm}(\deg Q_1,\ldots,\deg Q_q)$. Let $f^1,\ldots,f^m$ be $m$ holomorphic curves from $\Delta (R)$ into $V\ (1\le m\le n+1)$, which are nondegenerate over $I_d(V)$. Assume that there exists a holomorphic function $h$ on $\Delta$ satisfying
% $$\lambda\nu_h+\sum_{i}\nu_{F^i_M}\nu\ge \sum_{i=1}^m\sum_{j=1}^q\omega_j\nu_{Q_j(F^i)}\text{ and }|h|\le\prod_{i=1}^m\|F^i\|^\rho,$$
% where $F^i=(F^i_0,\ldots,F^i_n)$ is a reduced representation of $f^i\ (1\le i\le m)$, $\lambda$ and $\rho$ are non-negative numbers. 
Let the notations and the assumption as in Lemma \ref{lem3.1}. Then for an arbitrarily given $\epsilon$ satisfying 
$$\gamma=\sum_{j=1}^q\omega_j -M-1-\frac{\lambda\rho}{d}>\epsilon(\sigma_{M+1}+\frac{\rho}{d}),$$ 
there exists a positive constant $C$, depending only on $\epsilon,Q_j\ (1\le j\le q)$, such that
$$\left (|h|^{\lambda+\epsilon}\prod_{i=1}^m\dfrac{|F^i_{0}|^{\gamma-\epsilon(\sigma_{M+1}+\frac{\rho}{d})}|F^i_{M}|^{1+\epsilon}\prod_{j,p}|F^i_{p}(Q_j)|^{\epsilon/q}}{\prod_{j=1}^q|Q_j(F^i)|^{\omega_j}}\right)^{1/m}\le C\left(\dfrac{2R}{R^2-|z|^2}\right)^{\sigma_M+\epsilon\tau_M}.$$
\end{lemma}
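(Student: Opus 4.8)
The plan is to feed the pseudo-metric produced by Lemma~\ref{lem3.1} into the Generalized Schwarz Lemma (Lemma~\ref{lem2.5}) and then to rewrite the resulting estimate in the form demanded by the statement. Recall that Lemma~\ref{lem3.1} asserts that $d\tau^2=\eta^{2/m}|dz|^2$ is a continuous pseudo-metric of strictly negative curvature; quantitatively, its proof produces a constant $C_2>0$ with $\Delta\log\eta^{2/m}\ge C_2\,\eta^{2/m}$ on $\Delta(R)$ in the sense of distributions. First I would rescale, setting $v:=c\,\eta^{1/m}$ with $c=\sqrt{C_2/2}$, so that $v\ge 0$ is continuous and $\Delta\log v=\tfrac12\Delta\log\eta^{2/m}\ge\tfrac{C_2}{2}\eta^{2/m}=v^2$. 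Lemma~\ref{lem2.5} then applies directly and gives $\eta^{1/m}(z)\le c^{-1}\cdot\frac{2R}{R^2-|z|^2}$ for every $z\in\Delta(R)$.

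Next I would raise this inequality to the power $\sigma_M+\epsilon\tau_M$. Since by definition $\eta^{\sigma_M+\epsilon\tau_M}$ is exactly the bracketed quantity in Lemma~\ref{lem3.1}, this yields
\[
\Bigl(|h|^{\lambda+\epsilon}\prod_{i=1}^m\frac{|F^i_{0}|^{\gamma-\epsilon(\sigma_{M+1}+\frac{\rho}{d})}|F^i_{M}|\prod_{p=0}^{M}|F^i_{p}|^{\epsilon}}{\prod_{j=1}^q\bigl(|Q_j(F^i)|\prod_{p=1}^{M-1}\log(\delta^i/\varphi^i_{p}(Q_j))\bigr)^{\omega_j}}\Bigr)^{1/m}\le C'\Bigl(\frac{2R}{R^2-|z|^2}\Bigr)^{\sigma_M+\epsilon\tau_M},
\]
where $C'=c^{-(\sigma_M+\epsilon\tau_M)}$ depends only on $\epsilon$ and the $Q_j$ (the dependence on each $f^i$ entering only through $\delta^i$, which is controlled by Theorem~\ref{thm2.2}).

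It remains to transform the left-hand side above into the one in the statement, and this is the real content. Here I would invoke the defining identity $|F^i_{p}(Q_j)|^2=\varphi^i_{p}(Q_j)\,|F^i_{p}|^2$, which rewrites $\prod_{j,p}|F^i_{p}(Q_j)|^{\epsilon/q}=\prod_{p=1}^{M-1}|F^i_{p}|^{\epsilon}\cdot\prod_{j,p}\varphi^i_{p}(Q_j)^{\epsilon/(2q)}$. Substituting this and carrying out a direct comparison of the powers of each $|F^i_{p}|$ shows that the ratio of the statement's left-hand side to the left-hand side of the displayed inequality is
\[
\prod_{i=1}^m\Bigl(\prod_{j,p}\varphi^i_{p}(Q_j)^{\epsilon/(2q)}\cdot\prod_{j=1}^q\bigl(\textstyle\prod_{p=1}^{M-1}\log(\delta^i/\varphi^i_{p}(Q_j))\bigr)^{\omega_j}\Bigr)^{1/m}.
\]
Since $\|Q_j\|=1$ forces $0<\varphi^i_{p}(Q_j)\le 1$ and $\delta^i>1$, each elementary factor $t\mapsto t^{\epsilon/(2q)}\bigl(\log(\delta^i/t)\bigr)^{\omega_j}$ is bounded on $(0,1]$ (it vanishes as $t\to0^+$ and is finite at $t=1$), so, there being only finitely many indices, the whole product is bounded above by a constant depending only on $\epsilon$ and the $Q_j$. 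Absorbing this bound into $C'$ produces the asserted constant $C$ and completes the argument.

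The step I expect to be most delicate is this last reconciliation: one must track the $\epsilon$-regularizing factors --- the products $\prod_{p}|F^i_{p}|^{\epsilon}$, the logarithmic weights, and the contact functions $\varphi^i_{p}(Q_j)$ --- carefully enough that the powers of each $|F^i_{p}|$ cancel exactly and the residual conversion factor is genuinely bounded uniformly in $z\in\Delta(R)$. One must also verify that the resulting constant $C$ depends only on $\epsilon$ and $Q_1,\dots,Q_q$ and not on the individual maps $f^i$, the only map-dependent quantity entering being $\delta^i$, which is uniformly controlled by Theorem~\ref{thm2.2}.
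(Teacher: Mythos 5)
Your strategy coincides with the paper's proof of Lemma \ref{lem3.3}: take the curvature inequality $\Delta\log\eta^{2/m}\ge C_2\,\eta^{2/m}$ from the proof of Lemma \ref{lem3.1}, apply the generalized Schwarz lemma (Lemma \ref{lem2.5}) to get $\eta^{1/m}\le C_3\,\tfrac{2R}{R^2-|z|^2}$, raise to the power $\sigma_M+\epsilon\tau_M$, and then convert Lemma \ref{lem3.1}'s bracket into the stated left-hand side, absorbing the residual contact-function and logarithm factors via the boundedness of $x\mapsto x^{\epsilon/q}\log^{\omega}(\delta/x^2)$ on $(0,1]$. Your explicit rescaling $v=c\,\eta^{1/m}$ is a cleaner justification of the Schwarz step than the paper's, and your remark that the constant depends on the maps only through $\delta^i$, which Theorem \ref{thm2.2} controls in terms of $\epsilon$ and the $Q_j$, is exactly the point the paper relies on.

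However, the step you yourself flagged as delicate does contain a genuine slip under the convention you adopt. If $\prod_{j,p}$ means $\prod_{j=1}^q\prod_{p=1}^{M-1}$ (the paper's declared convention), the powers of $|F^i_0|$ do \emph{not} cancel: Lemma \ref{lem3.1}'s numerator contains $\prod_{p=0}^{M}|F^i_p|^{\epsilon}$, hence a factor $|F^i_0|^{\epsilon}$, while your rewriting of $\prod_{j,p}|F^i_p(Q_j)|^{\epsilon/q}$ produces only $\prod_{p=1}^{M-1}|F^i_p|^{\epsilon}$. The true ratio is therefore your displayed product times $\prod_{i=1}^m|F^i_0|^{-\epsilon/m}$, and since a reduced representation on $\Delta(R)$ admits no uniform positive lower bound for $|F^i_0|$, this factor cannot be absorbed into a constant: the argument breaks there. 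The repair --- which is what the paper implicitly does, as its intermediate display with $\prod_{p=0}^{M-1}$ shows --- is to read $\prod_{j,p}$ as $\prod_{j=1}^q\prod_{p=0}^{M-1}$. Then the $p=0$ factors $|F^i_0(Q_j)|^{\epsilon/q}=\bigl(\varphi^i_0(Q_j)\bigr)^{\epsilon/(2q)}|F^i_0|^{\epsilon/q}$ supply exactly the missing $|F^i_0|^{\epsilon}$, the extra factors $\bigl(\varphi^i_0(Q_j)\bigr)^{\epsilon/(2q)}\le 1$ are harmless, and your pairing of each $\bigl(\varphi^i_p(Q_j)\bigr)^{\epsilon/(2q)}$ with $\log^{\omega_j}\bigl(\delta^i/\varphi^i_p(Q_j)\bigr)$ closes the proof precisely as in the paper.
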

\begin{proof}
As in the proof of Lemma \ref{lem3.1}, we have
$$\ddc\log\eta^{1/m}\le C_2\eta^{2/m}\ddc|z|^2.$$
According to Lemma \ref{lem2.5}, this implies that
$$ \eta^{1/m}\le C_3\dfrac{2R}{R^2-|z|^2},$$
for some positive constant $C_3$. Then we have
$$ \left(|h|^{\lambda+\epsilon}\prod_{i=1}^m\dfrac{|F^i_{0}|^{\gamma-\epsilon(\sigma_{M+1}+\frac{\rho}{d})}|F^i_{M}|\prod_{p=0}^{M}|F^i_{p}|^{\epsilon}}{\prod_{j=1}^q\left (|Q_j(F^i)|\cdot\prod_{p=1}^{M-1}\log (\delta^i/\varphi^i_{p}(Q_j))\right)^{\omega_j}}\right )^{\frac{1}{m(\sigma_M+\epsilon\tau_M)}}\le C_3\dfrac{2R}{R^2-|z|^2}.$$
It follows that
$$ \left( |h|^{\lambda+\epsilon}\prod_{i=1}^m\dfrac{|F^i_{0}|^{\gamma-\epsilon(\sigma_{M+1}+\frac{\rho}{d})}|F^i_{M}|^{1+\epsilon}\prod^i_{j,p}|F^i_{p}(Q_j)|^{\frac{\epsilon}{q}}}{\prod_{j=1}^q\left(|Q_j(F^i)|\prod_{p=0}^{M-1}(\varphi^i_{p}(Q_j))^{\frac{\epsilon}{2q}}\log (\delta^i/\varphi^i_{p}(Q_j))\right)^{\omega_j}}\right )^{\frac{1}{m(\sigma_M+\epsilon\tau_M)}}\le C_3\dfrac{2R}{R^2-|z|^2}.$$
Note that the function $x^{\frac{\epsilon}{q}}\log^{\omega}\left (\dfrac{\delta}{x^2}\right)\ (\omega>0,0<x\le 1)$ is bounded. Then we have
$$ \left(|h|^{\lambda+\epsilon}\prod_{i=1}^m\dfrac{|F^i_{0}|^{\gamma-\epsilon(\sigma_{M+1}+\frac{\rho}{d})}|F^i_{M}|^{1+\epsilon}\prod_{j,p}|F^i_{p}(Q_j)|^{\epsilon/q}}{\prod_{j=1}^q|Q_j(F^i)|^{\omega_j}}\right )^{\frac{1}{m(\sigma_M+\epsilon\tau_M)}}\le C_4\dfrac{2R}{R^2-|z|^2},$$
for a positive constant $C_4$. The lemma is proved.
\end{proof}

\begin{lemma}[{cf. \cite[Lemma 1.6.7]{Fu93}}]\label{lem3.4}
Let $d\sigma^2$ be a conformal flat metric on an open Riemann surface $S$. Then for every point $p\in S$, there is a holomorphic and locally biholomorphic map $\Phi$ of a disk $\Delta (R_0)$ onto an open neighborhood of $p$ with $\Phi(0)=p$ such that $\Phi$ is a local isometric, namely the pull-back $\Phi^*(d\sigma^2)$ is equal to the standard (flat) metric on $\Delta(R_0)$, and for some point $a_0$ with $|a_0|=1$, the curve $\Phi (\overline{0,R_0a_0})$ is divergent in $S$ (i.e., for any compact set $K\subset S$, there exists an $s_0<R_0$ such that $\Phi (\overline{0,s_0a_0})$ does not intersect $K$).
\end{lemma}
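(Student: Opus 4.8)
The plan is to realize $\Phi$ as the maximal extension of the developing map of the flat metric, and then to single out a direction in which this extension is blocked by the image curve escaping to infinity. The whole argument is the standard developing-map construction, so I will organize it around local isometries and the geodesics of $d\sigma^2$.

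First I would use flatness to produce a local holomorphic isometry at $p$. Since $d\sigma^2$ is conformal, in a conformal coordinate $z$ centered at $p$ we may write $d\sigma^2=e^{2u}|dz|^2$, and flatness ($K\equiv 0$) forces $u$ to be harmonic; hence locally $u=\Re g$ for a holomorphic $g$ and the density equals $|f|^2$ with $f=e^{g}$ nowhere zero. Setting $w=\int_0^z f(\zeta)\,d\zeta$ gives a holomorphic coordinate with $|dw|^2=d\sigma^2$ and nonvanishing derivative, so its local inverse $\Phi_0$ maps a small disk $\Delta(\epsilon)$ biholomorphically onto a neighborhood of $p$ with $\Phi_0(0)=p$ and $\Phi_0^*d\sigma^2=|dz|^2$. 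Because each such developing chart is holomorphic, the extension I build below will stay holomorphic and locally biholomorphic automatically.

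Next I would extend $\Phi_0$ maximally. Along each ray $t\mapsto ta$ with $|a|=1$ I continue $\Phi_0$ analytically, which amounts to following the unit-speed $d\sigma^2$-geodesic issuing from $p$ in the corresponding direction; let $T_a\in(0,\infty]$ be the supremum of $t$ for which this persists as a local isometry into $S$. The resulting domain $D=\{ta:\ |a|=1,\ 0\le t<T_a\}$ is open and star-shaped, hence simply connected, so by the monodromy theorem the continuations assemble into a single-valued holomorphic local isometry on $D$. Taking $R_0=\inf_{|a|=1}T_a$ (the radius of the largest disk about $0$ contained in $D$) and restricting, I obtain $\Phi:\Delta(R_0)\to S$ with $\Phi^*d\sigma^2=|dz|^2$, $\Phi(0)=p$, and each radial curve $t\mapsto\Phi(ta)$ a unit-speed geodesic, so $\Phi(\overline{0,R_0a})$ has length exactly $R_0$.

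Finally, and this is the crux, I would produce the divergent direction $a_0$; I expect the hardest step to be showing the maximal extension must be obstructed in some direction by the geodesic leaving every compact set, and selecting that direction. The function $a\mapsto T_a$ is lower semicontinuous: by continuous dependence of geodesics on their initial data, if the geodesic in direction $a_0$ stays in a compact set up to a time $T<T_{a_0}$ then nearby geodesics survive to time $T$ as well, giving $\liminf_{a\to a_0}T_a\ge T_{a_0}$. A lower semicontinuous function attains its infimum on the compact circle, so $T_{a_0}=R_0$ for some $a_0$. If $R_0<\infty$, the geodesic $t\mapsto\Phi(ta_0)$ is inextensible at the finite time $R_0$; were it to remain in a compact subset of $S$ as $t\to R_0$, the local developing charts would extend $\Phi$ past $R_0a_0$ and, by compactness of the circle, past $R_0$, contradicting maximality. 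Hence it leaves every compact set, i.e.\ $\Phi(\overline{0,R_0a_0})$ is divergent. If instead $R_0=\infty$, then all geodesics from $p$ are complete, so $(S,d\sigma^2)$ is complete by the Hopf--Rinow theorem; since $S$ is open, the classical existence of a ray from $p$ furnishes a proper, hence divergent, geodesic whose initial direction supplies $a_0$. The main obstacle is precisely this divergence claim: ruling out accumulation of the extremal geodesic in a compact set and making the extension-versus-maximality contradiction rigorous.
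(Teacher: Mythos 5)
Your proposal is correct, and it is worth noting that the paper itself contains no proof of this lemma --- it is imported verbatim from Fujimoto \cite[Lemma 1.6.7]{Fu93} --- so the only meaningful comparison is with the classical argument, which is exactly what you give: the developing-map construction (flat conformal factor $e^{2u}$ with $u$ harmonic, hence a local holomorphic isometric chart), maximal analytic continuation along rays with lower semicontinuity of $a\mapsto T_a$ to select an extremal direction $a_0$, the Lipschitz--Cauchy convergence plus flat-chart gluing to derive the contradiction when $R_0<\infty$, and Hopf--Rinow plus the existence of a ray from $p$ when $R_0=\infty$ (a case the lemma's statement tacitly allows, since $R_0$ may be infinite). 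The one cosmetic blemish is the clause ``by compactness of the circle, past $R_0$'': once the curve in the extremal direction converges to a point $q$, the flat chart at $q$ already continues $\Phi$ along that single ray past $R_0a_0$, contradicting $T_{a_0}=R_0$ directly, so circle-compactness is not needed in your setup (it would be needed only in the variant where $R_0$ is defined as the supremum of radii of disks to which $\Phi_0$ extends, rather than as $\inf_a T_a$).
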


\begin{theorem}\label{thm3.5}
Let $S$ be an open Riemann surface and $V$ be an $\ell$-dimension projective subvariety of $\P^n(\C)$. Let $f^1,\ldots,f^m$ be $m$ holomorphic curves from $S$ into $V\ (1\le m\le n)$. Let $Q_1,\ldots,Q_q$ be $q$ hypersurfaces of $\P^n(\C)$ in $N-$subgeneral position with respect to $V$ and $d=\mathrm{lcm}(\deg Q_1,\ldots,\deg Q_q)$. Assume that each $f_i$ is nondegenerate over $I_d(V)$, there exists a holomorphic function $h$ on $S$ satisfying
$$ \lambda\nu_h+\sum_{i=1}^m\nu_{F^i_M}\ge \sum_{i=1}^m\sum_{j=1}^q\omega_j\nu_{Q_j(F^i)}\text{ and }|h|\le\prod_{i=1}^m\|F^i\|^\rho,$$
where $F^i=(F^i_0,\ldots,F^i_n)$ is a reduced representation of $f^i\ (1\le i\le m)$ and the metric
$$ ds^2=2|\xi|^{2/m}\cdot \left(\prod_{i=1}^m\|F^i\|\right)^{2/m}|dz^2|, $$ 
where $\xi$ is a nowhere zero holomorphic function, is complete on $S$. Then we have
$$q\le \frac{2N-\ell+1}{\ell+1}\left (M+1+\frac{\lambda\rho}{d}+\frac{M(M+1)}{2d}\right).$$
\end{theorem}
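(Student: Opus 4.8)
The plan is to argue by contradiction, following Fujimoto's completeness method: assuming the displayed bound fails, I will produce a divergent curve on $S$ of finite $ds^2$-length, contradicting completeness. First I would record the numerical consequence of the failed bound. Since $M+1=H_V(d)\ge\ell+1$, the assumed inequality $q>\frac{2N-\ell+1}{\ell+1}(M+1+\frac{\lambda\rho}{d}+\frac{M(M+1)}{2d})$ forces $q>2N-\ell+1$, so Lemma \ref{lem2.1} applies and yields weights $\omega_j$. By parts (ii)–(iii) of that lemma, $\sum_{j=1}^q\omega_j=\tilde\omega(q-2N+\ell-1)+\ell+1\ge\frac{(\ell+1)q}{2N-\ell+1}$, while the failed bound rearranges to $\frac{(\ell+1)q}{2N-\ell+1}>M+1+\frac{\lambda\rho}{d}+\frac{\sigma_M}{d}$. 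Hence $\gamma=\sum_{j=1}^q\omega_j-M-1-\frac{\lambda\rho}{d}$ satisfies $\gamma>\frac{\sigma_M}{d}>0$. I would then fix $\epsilon>0$ small enough that both $\gamma>\epsilon(\sigma_{M+1}+\frac\rho d)$ (so that Lemmas \ref{lem3.1} and \ref{lem3.3} are available) and the exponent $\beta=\frac{\sigma_M+\epsilon\tau_M}{d(\gamma-\epsilon(\sigma_{M+1}+\frac\rho d))}$ remains strictly below $1$; this is possible precisely because its limit as $\epsilon\to0$ is $\frac{\sigma_M}{d\gamma}<1$, which is exactly where $\gamma>\sigma_M/d$ is used.

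Next I would build a flat metric in order to invoke Lemma \ref{lem3.4}. Starting from $ds^2=2|\xi|^{2/m}(\prod_i\|F^i\|)^{2/m}|dz|^2$, I would divide out the curvature- and divisor-carrying factors to obtain a conformal flat metric $d\sigma^2$ on $S$ whose density differs from that of $ds^2$ by a suitable power of the left-hand side of Lemma \ref{lem3.3}. Here the hypothesis $\lambda\nu_h+\sum_i\nu_{F^i_M}\ge\sum_{i,j}\omega_j\nu_{Q_j(F^i)}$ is exactly what guarantees that, after this division, the resulting density has no poles (only admissible conical zeros arising from the $Q_j(F^i)$), so that $d\sigma^2$ is a genuine flat metric. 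Since each $f^i$ is nondegenerate over $I_d(V)$, the $|F^i_0|$ are nowhere zero and $\|F^i\|^d$ is comparable to $|F^i_0|$ with uniform constants on all of $S$, which keeps the exponent bookkeeping global. Applying Lemma \ref{lem3.4} to $d\sigma^2$ at a point $p$ produces a holomorphic, locally biholomorphic local isometry $\Phi:\Delta(R_0)\to S$ with $\Phi(0)=p$ and a divergent image curve $\Gamma=\Phi(\overline{0,R_0a_0})$; since $d\sigma^2$ is incomplete one has $R_0<\infty$, and completeness of $ds^2$ makes the $ds^2$-length of $\Gamma$ infinite.

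Finally I would pull everything back by $\Phi$ and estimate that length. On $\Delta(R_0)$, applying Lemma \ref{lem3.3} to the maps $f^i\circ\Phi$ bounds its left-hand side by $C(2R_0/(R_0^2-|z|^2))^{\sigma_M+\epsilon\tau_M}$; combined with $|h|\le\prod_i\|F^i\|^\rho$, the comparison $\|F^i\|^d\asymp|F^i_0|$, and the isometry identity $\Phi^*d\sigma^2=|dz|^2$, this yields a pointwise bound on the length element of $\Phi^*ds^2$ along the radius by $C'(2R_0/(R_0^2-s^2))^{\beta}$. Integrating gives $\int_\Gamma ds\le C''\int_0^{R_0}(R_0^2-s^2)^{-\beta}\,ds<\infty$ because $\beta<1$, contradicting the infinite length forced by completeness. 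The main obstacle is precisely this last comparison: arranging the flat metric and matching exponents so that the divisor hypothesis absorbs the zeros of $h$ and of the Wronskians $F^i_M$ against the poles created by the $Q_j(F^i)$ in the denominator, while the Ahlfors–Schwarz estimate (Lemma \ref{lem2.5}, through Lemma \ref{lem3.3}) delivers a length exponent strictly below $1$. This is exactly where the threshold $M+1+\frac{\lambda\rho}{d}+\frac{M(M+1)}{2d}$ in the statement is pinned down.
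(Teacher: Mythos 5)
Your opening reduction is correct and matches the paper: the failed bound forces $q>2N-\ell+1$, Lemma \ref{lem2.1}(ii)--(iii) give $\sum_j\omega_j\ge\frac{(\ell+1)q}{2N-\ell+1}$, hence $\gamma>\sigma_M/d$, and the final length exponent (your $\beta$, the paper's $\rho$) being $<1$ is indeed where this is used. But the core of your argument has genuine gaps. First, the metric you build is not one to which Lemma \ref{lem3.4} applies. A density with ``conical zeros'' is not a conformal flat metric; and, worse, the factors you retain ($\|F^i\|$, $|F^i_0|$, $|F^i_p(Q_j)|$) are norms, not moduli of holomorphic functions, so their logarithms are not harmonic -- replacing them by ``comparable'' holomorphic quantities does not help, because flatness is an exact identity, not a bi-Lipschitz property. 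The paper instead works on $S'=\{h\ne 0,\ \psi(F^i)_{jp}\ne 0\}$ with a metric built \emph{only} from $|\xi|,|h|,|Q_j(F^i)|,|F^i_M|,|\psi(F^i)_{jp}|$, where each $\psi(F^i)_{jp}$ is a chosen not-identically-zero holomorphic component of $F^i_p(Q_j)$; the norms $|G^i_0|,|G^i_p(Q_j)|$ enter only through one-sided inequalities inside the final length estimate. Your sign convention is also inverted: to use the \emph{upper} bound of Lemma \ref{lem3.3} on $\Phi^*ds$, the functions $h,F^i_M,\psi(F^i)_{jp}$ must sit in the denominator of the flat metric, so its density has \emph{poles} (not zeros) at the bad points; with your ``no poles, only zeros'' design one gets $\Phi^*ds^2=(\Lambda\circ\Phi)^{-2c}|dw|^2$, where $\Lambda$ is the left-hand side of Lemma \ref{lem3.3}, and you would need a lower bound on $\Lambda\circ\Phi$ that nothing provides.

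Second, taking $\epsilon$ merely ``small enough'' is insufficient. The paper pins $\epsilon$ in a two-sided interval: the upper bound yields your $\beta<1$, but the \emph{lower} bound is what makes $\epsilon\rho^*/(mq)>1$, i.e.\ the flat density blows up at the removed points with exponent exceeding $1$, placing them at infinite distance. Without this, the divergent curve furnished by Lemma \ref{lem3.4} on $S'$ may simply converge to a zero of $h\prod_i F^i_M\prod_{j,p}\psi(F^i)_{jp}$, an interior point of $S$, and its finite $ds^2$-length then contradicts nothing. Third, you assert ``since $d\sigma^2$ is incomplete one has $R_0<\infty$'' without any justification; nothing in the hypotheses makes the flat metric incomplete, and this is exactly the hard half of the proof. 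The paper's route: the flat-metric/length argument only shows $d\tau^2$ is \emph{complete} on $S'$; then Huber's theorem and Osserman's result show $S$ is a finitely punctured compact surface; finally the universal cover of $S$ is treated in two cases, with Nevanlinna's second main theorem killing the parabolic case and a Yau--Karp integral argument (played against the finite-area bound coming from the negatively curved metric of Lemma \ref{lem3.1}) killing the hyperbolic case. (A shorter classical alternative would handle $R_0=\infty$ by letting $R\to\infty$ in Lemma \ref{lem3.3}, forcing its left-hand side to vanish identically and contradicting nondegeneracy of the $f^i$ and $h\not\equiv 0$ -- but \emph{some} argument for this case is indispensable, and your proposal contains none.)
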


\begin{proof}
If there are some hypersurfaces $Q_j$ such that $V\subset Q_j$, for instance they all are $Q_{q-r+1},\ldots,Q_q\ (0\le r\le N-\ell+1)$, then by setting $N'=N-r,q'=q-r$ we have
\begin{align*}
&\frac{2N-\ell+1}{\ell+1}\left (M+1+\frac{\lambda\rho}{d}+\frac{M(M+1)}{2d}\right)-q\\
&\ \ \ \ \ \ge \frac{2N'-\ell+1}{\ell+1}\left (M+1+\frac{\lambda\rho}{d}+\frac{M(M+1)}{2d}\right)-q'
\end{align*}
and $Q_{1},\ldots,Q_{q-r}$ are in $N'$-subgeneral position with respect to $V$. Then, without loss of generality, we may assume that $V\not\subset Q_j$ for all $j=1,\ldots,q$.

We fix a $\C$-ordered basis $\mathcal V=([v_0],\ldots,[v_M])$ of $I_d(V)$ as in the Section 3. By replacing $Q_i$ with $Q_i^{d/\deg Q_i}\ (1\le i\le q)$ if necessary, we may assume that all $Q_i\ (1\le i\le q)$ are of the same degree $d$. Suppose that
$$ [Q_j]=\sum_{i=0}^Ma_{ji}[v_i],$$
where $\sum_{i=0}^M|a_{ji}|^2=1$.

Since $f^i\ (1\le i\le m)$ is nondegenerate over $I_d(V)$, the contact functions satisfy
$$ F^i_{p}(Q_j)\not\equiv 0, \forall 1\le j\le q,0\le p\le M.$$
Then, for each $j,p\ (1\le j\le q,0\le p\le M)$, we may choose $i_1,\ldots,i_p$ with $0\le i_1<\cdots<i_p\le M$ such that
$$ \psi(F^i)_{jp}=\sum_{s\ne i_1,\ldots,i_p}a_{js}W(v_s(F^i),v_{i_1}(F^i),\ldots,v_{i_p}(F^i))\not\equiv 0.$$
We note that $\psi(F^i)_{j0}=F^i_{0}(Q_j)=Q_j(F^i)$ and $\psi(F^i)_{jM}=F^i_{M}$.

Suppose contrarily that
$$q>\frac{2N-\ell+1}{\ell+1}\left (M+1+\frac{\lambda\rho}{d}+\frac{M(M+1)}{2d}\right).$$
From Theorem \ref{lem2.1}, we have
$$ (q-2N+\ell-1)\tilde\omega=\sum_{j=1}^q\omega_j-\ell-1;\ \tilde\omega\ge\omega_j>0 \text{ and }\tilde\omega\ge\dfrac{\ell+1}{2N-\ell+1}.$$
Therefore,
\begin{align}\label{new2}
\begin{split}
\sum_{j=1}^q\omega_j-M-1-\frac{\lambda\rho}{d}&\ge\tilde\omega(q-2N+\ell-1)-M+\ell-\frac{\lambda\rho}{d}\\ 
&\ge\dfrac{\ell+1}{2N-\ell+1}(q-2N+\ell-1)-M+\ell-\frac{\lambda\rho}{d}\\
&= \dfrac{\ell+1}{2N-\ell+1}\left(q-\dfrac{(2N+\ell-1)(M+1+\frac{\lambda\rho}{d})}{\ell+1}\right)\\
&>\dfrac{\ell+1}{2N-\ell+1}\cdot\dfrac{(2N+\ell-1)M(M+1)}{2d(\ell+1)}=\dfrac{\sigma_M}{d}.
\end{split}
\end{align}
Then, we can choose a rational number $\epsilon\ (>0)$ such that
$$ \dfrac{d(\sum_{j=1}^q\omega_j-M-1-\frac{\lambda\rho}{d})-\sigma_M}{d(\sigma_{M+1}+\frac{\rho}{d})+\tau_M}>\epsilon> \dfrac{d(\sum_{j=1}^q\omega_j-M-1-\frac{\lambda\rho}{d})-\sigma_M}{\frac{1}{mq}+d(\sigma_{M+1}+\frac{\rho}{d})+\tau_M}.$$
We define the following numbers
\begin{align*}
\beta&:=d\left(\sum_{j=1}^q\omega_j-M-1-\frac{\lambda\rho}{d}-\epsilon\left(\sigma_{M+1}+\frac{\rho}{d}\right)\right)>\sigma_M+\epsilon\tau_{M},\\ 
\rho&:=\dfrac{1}{\beta}(\sigma_M+\epsilon\tau_M),\\
\rho^*&:=\dfrac{1}{(1-\rho)\beta}=\dfrac{1}{d(\sum_{j=1}^q\omega_j-M-1-\frac{\lambda\rho}{d})-\sigma_M-\epsilon(d\sigma_{M+1}+\rho+\tau_M)}. 
\end{align*}
It is clear that $0<\rho<1$ and $\frac{\epsilon\rho^*}{mq}>1.$

We consider a set
$$ S'=\{a\in S; \psi (F^i)_{jp}(a)\ne 0,h(a)\ne 0\ \forall 1\le i\le m; j=1,\ldots,q; p=0,\ldots,M\}$$
and define a new pseudo-metric on $S'$ as follows
$$ d\tau^2=|\xi|^{\frac{2(1+\beta\rho\rho^*)}{m}}\left (\frac{1}{|h|^{\lambda+\epsilon}}\prod_{i=1}^m\dfrac{\prod_{j=1}^q|Q_j(F^i)|^{\omega_j}}{|F^i_{M}|^{1+\epsilon}\prod_{j,p}|\psi(F^i)_{jp}|^{\frac{\epsilon}{q}}}\right)^{\frac{2\rho^*}{m}}|dz|^2.$$

Since $Q_j(F^i),F^i_{M},\psi(F^i)_{jp}\ (1\le j\le q)$ and $h$ are all holomorphic functions on $S'$, $d\tau^2$ is flat on $S'$. We now show that $d\tau^2$ is complete on $S'$.

Indeed, suppose contrarily that $S'$ is not complete with $d\tau^2$, there is a divergent curve $\gamma: [0,1)\rightarrow S'$ with finite length. Then, as $t\rightarrow 1$ there are only two cases: either $\gamma(t)$ tends to a point $a$ with
$$ (h\prod_{j=1}^{q}\prod_{p=0}^M\psi(F^i)_{jp})(a)=0$$
or else $\gamma (t)$ tends to the boundary of $S$.

For the first case, by Theorem \ref{thm2.4}, we have
\begin{align*}
\nu_{d\tau}(a)&\le -\biggl (\sum_{i=1}^m\nu_{F^i_M}(a)-\sum_{i=1}^m\sum_{j=1}^q\omega_j\nu_{Q_j(F^i)}(a)+\lambda\nu_h(a)\\ 
&\ \ +\bigl(\epsilon\sum_{i=1}^m\nu_{F^i_M}(a)+\epsilon\nu_{h}(a)+\dfrac{\epsilon}{q}\sum_{i=1}^m\sum_{j,p}\nu_{\psi(F^i)_{jp}}(a)\bigl)\biggl)\frac{\rho^*}{m}\\
&\le -\frac{\epsilon\rho^*}{m}\left(\sum_{i=1}^m\nu_{F^i_M}(a)+\nu_{h}(a)\right)-\dfrac{\epsilon\rho^*}{mq}\sum_{i=1}^m\sum_{j,p}\nu_{\psi(F^i)_{jp}}(a)\le -\dfrac{\epsilon\rho^*}{mq}.
\end{align*}
Then, there is a positive constant $C$ such that
$$ |d\tau|\ge\dfrac{C}{|z-a|^{\frac{\epsilon\rho^*}{mq}}}|dz|$$
in a neighborhood of $a$. Then we get
$$ L_{d\tau}(\gamma)=\int_0^1\|\gamma'(t)\|_{\tau}dt=\int_{\gamma}d\tau \ge\int_\gamma \dfrac{C}{|z-a|^{\frac{\epsilon\rho^*}{mq}}}|dz|=+\infty$$
($\gamma (t)$ tends to $a$ as $t\rightarrow 1$). This is a contradiction. Then, the second case must occur, i.e., $\gamma (t)$ tends to the boundary of $S$ as $t\rightarrow 1$.  

Take a disk $\Delta$ (in the metric induced by $d\tau^2$) around $\gamma(0)$. Since $d\tau$ is flat, by Lemma \ref{lem3.4}, $\Delta$ is isometric to an ordinary disk in the plane. Let $\Phi:\Delta(r)=\{|\omega|<r\}\rightarrow\Delta$ be this isometric with $\Phi(0)=\gamma(0)$. Extend $\Phi$ as a local isometric into $S'$ to a the largest disk possible $\Delta(R)=\{|\omega|<R\}$, and denoted again by $\Phi$ this extension (for simplicity, we may consider $\Phi$ as the exponential map). Since $\Phi$ cannot be extended to a larger disk, it must be hold that the image of $\Phi$ goes to the boundary of $S'$. But, this image cannot go to points $z_0$ of $S$ with $h(z_0)\prod_{i=1}^m\left (F^i_M(z_0)\prod_{j,p}\psi(F^i)_{jp}|(z_0)\right )=0$, since we have already shown that $\gamma(0)$ is infinitely far away in the metric $d\tau^2$ with respect to these points. Then the image of $\Phi$ must go to the boundary $S$. Hence, by again Lemma \ref{lem3.4}, there exists a point $w_0$ with $|w_0|= R$ so that $\Gamma=\Phi(\overline{0,w_0})$ is a divergent curve on $S$.

Our goal now is to show that $\Gamma$ has finite length in the original metric $ds^2$ on $S$, contradicting the completeness of $S$. Let $g^i:=f^i\circ\Phi :\Delta(R)\rightarrow V\subset\P^n(\C)$ be a holomorphic map which is nondegenerate over $I_d(V)$. Then $g^i$ have a reduced representation
$$ G^i=(g^i_0,\ldots,g^i_n),$$
where $g^i_j=f^i_j\circ\Phi\ (1\le i\le m, 0\le j\le n).$
Hence, we have:
\begin{align*}
\Phi^*ds^2&=2|\xi\circ\Phi|^{2/m}\prod_{i=1}^m\|F^i\circ\Phi\|^{2/m}|\Phi^*dz|^2\\
&=2|\xi\circ\Phi|^{2/m}\left(\prod_{i=1}^m\|G^i\|^{2/m}\right)\left|\dfrac{d(z\circ\Phi)}{dw}\right|^2|dw|^2,
\end{align*}
\begin{align*}
G^i_M&=(F^i\circ\Phi)_M=F^i_M\circ\Phi\cdot\left(\dfrac{d(z\circ\Phi)}{dw}\right)^{\sigma_M},\\
\psi(G^i)_{jp}&=\psi(F^i\circ\Phi)_{jp}=\psi(F^i)_{jp}\cdot\left(\dfrac{d(z\circ\Phi)}{dw}\right)^{\sigma_p}, (0\le p\le M).
\end{align*}
On the other hand, since $\Phi$ is locally isometric,
\begin{align*}
&|dw|=|\Phi^*d\tau|\\
&=|\xi\circ\Phi|^{\frac{1+\beta\rho\rho^*}{m}}\left (\dfrac{1}{|h\circ\Phi|^{\lambda+\epsilon}}\prod_{i=1}^m\dfrac{\prod_{j}|Q_j(F^i)\circ\Phi|^{\omega_j}}{|F^i_M\circ\Phi|^{1+\epsilon}\prod_{j,p}|\psi(F^i)_{jp}\circ\Phi|^{\epsilon/q}}\right)^{\rho^*/m}\left|\dfrac{d(z\circ\Phi)}{dw}\right|\cdot|dw|\\
&=|\xi\circ\Phi|^{\frac{1+\beta\rho\rho^*}{m}}\left (\dfrac{1}{|h\circ\Phi|^{\lambda+\epsilon}}\prod_{i=1}^m\dfrac{\prod_{j}|Q_j(G^i)|^{\omega_j}}{|G^i_M|^{1+\epsilon}\prod_{j,p}|\psi(G^i)_{jp}|^{\epsilon/q}}\right)^{\rho^*/m}\left|\dfrac{d(z\circ\Phi)}{dw}\right|^{1+\beta\rho\rho^*}\cdot|dw|
\end{align*}
(because $1+\rho^*(\sigma_M+\epsilon\tau_M)=1+\beta\rho\rho^*$).
This implies that
\begin{align*}
\left|\dfrac{d(z\circ\Phi)}{dw}\right|&=|\xi\circ\Phi|^{-\frac{1}{m}}\left (|h\circ\Phi|^{\lambda+\epsilon}\prod_{i=1}^m\dfrac{|G^i_M|^{1+\epsilon}\prod_{j,p}|\psi(G^i)_{jp}|^{\epsilon/q}}{\prod_{j}|Q_j(G^i)|^{\omega_j}}\right)^{\frac{\rho^*}{m(1+\beta\rho\rho^*)}}\\
&\le |\xi\circ\Phi|^{-\frac{1}{m}}\left (|h\circ\Phi|^{\lambda+\epsilon}\prod_{i=1}^m\dfrac{|G^i_M|^{1+\epsilon}\prod_{j,p}|G^i_p(Q_j)|^{\epsilon/q}}{\prod_{j}|Q_j(G^i)|^{\omega_j}}\right)^{\frac{\rho^*}{m(1+\beta\rho\rho^*)}}\\
&=|\xi\circ\Phi|^{-\frac{1}{m}}\left (|h\circ\Phi|^{\lambda+\epsilon}\prod_{i=1}^m\dfrac{|G^i_M|^{1+\epsilon}\prod_{j,p}|G^i_p(Q_j)|^{\epsilon/q}}{\prod_{j}^q|Q_j(G^i)|^{\omega_j}}\right)^{\frac{1}{m\beta}}.
\end{align*}
Hence, we have
\begin{align*}
\Phi^*ds&\le \sqrt{2}\prod_{i=1}^m\|G^i\|^{\frac{1}{m}}\left (|h\circ\Phi|^{\lambda+\epsilon}\prod_{i=1}^m\dfrac{|G^i_M|^{1+\epsilon}\prod_{j,p}|G^i_p(Q_j)|^{\epsilon/q}}{\prod_{j=1}^q|Q_j(G^i)|^{\omega_j}}\right)^{\frac{1}{m\beta}}|dw|\\
&=\sqrt{2}\left (|h\circ\Phi|^{\lambda+\epsilon}\prod_{i=1}^m\dfrac{\|G^i\|^{l}|G^i_M|^{1+\epsilon}\prod_{j,p}|G^i_p(Q_j)|^{\epsilon/q}}{\prod_{j=1}^q|Q_j(G^i)|^{\omega_j}}\right)^{\frac{1}{m\beta}}|dw|\\
&\le C_1\left (|h\circ\Phi|^{\lambda+\epsilon}\prod_{i=1}^m\dfrac{|G^i_0|^{\frac{l}{d}}|G^i_M|^{1+\epsilon}\prod_{j,p}|G^i_p(Q_j)|^{\epsilon/q}}{\prod_{j=1}^q|Q_j(G^i)|^{\omega_j}}\right)^{\frac{1}{m\beta}}|dw|.
\end{align*}
with a positive constant $C_1$. We note that $\frac{\beta}{d}=\sum_{j=1}^q\omega_j-M-1-\frac{\lambda\rho}{d}-\epsilon\left(\sigma_{M+1}+\frac{\rho}{d}\right)$. Then the inequality (\ref{new2}) yields that the conditions of Lemma \ref{lem2.5} are satisfied. Then, by applying Lemma \ref{lem2.5}, we have
$$ \Phi^*ds\le C_2\left (\dfrac{2R}{R^2-|w|^2}\right)^\rho|dw|$$
for some positive constant $C_2$. Also, we have that $0<\rho<1$. Then
$$L_{ds^2}(\Gamma)\le\int_{\Gamma}ds=\int_{\overline{0,w_0}}\Phi^*ds\le C_2\cdot\int_{0}^R\left(\dfrac{2R}{R^2-|w|^2}\right)^{\rho}|dw|<+\infty. $$
This contradicts the assumption of completeness of $S$ with respect to $ds^2$. Thus, $d\tau^2$ is complete on $S'$.

Then, we note that the metric $d\tau^2$ on $S'$ is flat. Then by a theorem of Huber (cf. \cite[Theorem 13, p.61]{Hu61}), the fact that $S'$ has finite total curvature (with respect to $d\tau^2$) implies that $S'$ is finitely connected. This means that there is a compact subregion of $S'$ whose complement is the union of a finite number of doubly-connected regions. Therefore, the functions $|h|\prod_{p=0}^M\prod_{j=1}^q|\psi(G_z)_{jp}|$ must have only a finite number of zeros, and the original surface $S$ is finitely connected. Due to Osserman (cf. \cite[Theorem 2.1]{O63}), each annular ends of $S'$, and hence of $S$, is conformally equivalent to a punctured disk. Thus, the Riemann surface $S$ must be conformally equivalent to a compact surface $\overline S$ punctured at a finite number of points $P_1,\ldots,P_r$. Then,  there are disjoint neighborhoods $U_i$ of $P_i\ (1\le i\le r)$ in $\overline S$ and biholomorphic maps $\phi_i:U_i\rightarrow\Delta$ with $\phi_i(P_i)=0$. Note that, the Poincare-metric on $\Delta^*=\Delta\setminus\{0\}$ is given by $d\sigma_{\Delta^*}^2=\dfrac{4|dw|^2}{|w|^2\log^2|w|^2}$, where $w$ is the complex coordinate on $\Delta$. 

As we known that the universal covering surface of $S$ is biholomorphic to $\C$ or a disk in $\C$. If the universal covering of $S$ is biholomorphic to $\C$ (denote by $\tilde\Phi :\C\rightarrow S$ this universal covering mapping), then from the assumption that
$$ \lambda\nu_h+\sum_{i=1}^m\nu_{F^i_M}\ge \sum_{i=1}^m\sum_{j=1}^q\omega_j\nu_{Q_j(F^i)}\text{ and }|h|\le\prod_{i=1}^m\|F^i\|^\rho,$$
we have
$$\lambda\rho\sum_{i=1}^mT_{f^i\circ\tilde\Phi}\sum_{j=1}^qN(r,\nu_{h\circ\tilde\Phi})\ge \sum_{i=1}^m\left(\sum_{j=1}^q\omega_jN(r,\nu_{Q_j(F^i\circ\tilde\Phi)})-N(r,\nu_{F^i_M\circ\Phi})\right),$$
where $T_f(r)$ is the characteristic function of the mapping $f:\C\to\P^n(\C)$ and $N(r,\nu)$ is the counting function of the divisor $\nu$ on $\C$ (see \cite{QA} for the definitions). Using the second main theorem (Theorem 1.1 in \cite{QA}), we have
$$ \|\ \lambda\rho\sum_{i=1}^mT_{f^i\circ\tilde\Phi}\ge\sum_{i=1}^m(\sum_{j=1}^qN^{[M]}(r,\nu_{Q_j(F^i\circ\tilde\Phi)})\ge \left(q-\frac{(2N-\ell+1)(M+1)}{\ell+1}\right) \sum_{i=1}^mT_{f^i\circ\tilde\Phi}.$$
Here, the symbol ``$\|$'' means the inequalities hold for all $r\in [1,+\infty)$ outside a finite Borel measure set $E$. Letting $r\rightarrow +\infty\ (r\not\in E)$, we get
$$ \lambda\rho\ge q- \frac{(2N-\ell+1)(M+1)}{\ell+1}$$
and arrive at a contradiction.

Then, we only consider the case where the universal covering surface of $S$ is biholomorphic to the unit disk $\Delta$ in $\C$. Let $\tilde\Phi :\Delta\rightarrow S$ be this holomorphic universal covering. Consider the following metric
$$ d\tilde\tau^2=\eta^2|dz|^2, $$
where
$$ \eta=\left (|h|^{\lambda+\epsilon}\prod_{i=1}^m\dfrac{|F^i_{0}|^{\gamma-\epsilon(\sigma_{M+1}+\frac{\rho}{d})}|F^i_{M}|^{1+\epsilon}\prod_{j,p}|F^i_{p}(Q_j)|^{\epsilon/q}}{\prod_{j=1}^q|Q_j(F^i)|^{\omega_j}}\right)^{\frac{1}{m(\sigma_M+\epsilon\tau_M)}}.$$
It is obvious that $d\tilde\tau^2$ is continuous on $S\setminus\bigcup_{j=1}^q (f^i)^{-1}(Q_j)$. Take a point $a$ such that $\prod_{j=1}^qQ_j(F^i)(a)=0$. From the assumption, we have
$$d\tilde\tau(a)\ge\dfrac{1}{m(\sigma_M+\epsilon\tau_M)}\biggl (\lambda\nu_{h}(a)+\sum_{i=1}^m\nu_{F^i_M}(a)-\sum_{i=1}^m\sum_{j=1}^q\omega_j\nu_{Q_j(F^i)}(a)\biggl)\ge 0.$$
Therefore $d\tilde\tau$ is continuous at $a$. This yields that $d\tilde\tau$ is a continuous pseudo-metric on $S$.
Now, from Lemma \ref{lem3.1}, we see that $d\tau^2$ has strictly negative curvature on $S$
Hence, by the decreasing distance property, we have
$$\Phi^*d\tau^2\le d\sigma_\Delta^2\le C_3\cdot(\Phi\circ\phi_i^{-1})^*d\sigma_{\Delta^*}^2\ (1\le i\le r)$$
for some positive constant $C_3$. This implies that
$$\int_{U_i}\Omega_{d\tau^2}\le \int_{\Phi^{-1}(U_i)}\Phi^*\Omega_{\sigma_\Delta^2}\le l_0C_3\int_{\Delta^*}\Omega_{d\sigma_{\Delta^*}^2}<\infty.$$
where $l_0$ is the number of the sheets of the covering $\Phi$. Then, it yields that
$$\int_S\Omega_{d\tau^2}\le \int_{S\setminus\bigcup_{i=1}^rU_i}\Omega_{d\tau^2}+l_0C_3r\int_{\Delta^*}\Omega_{d\sigma_{\Delta^*}^2}<\infty.$$

Now, denote by $ds^2$ the original metric on $S$. Similar as (\ref{3.2}), we have
$$ \ddc\log\eta \ge\dfrac{\gamma-\epsilon(\sigma_{M+1}+\frac{\rho}{d})}{\sigma_M+\epsilon\tau_M}\dfrac{d}{m}\sum_{i=1}^m\Omega_{f_i}.$$ 
Then there is a subharmonic function $v$ such that
\begin{align*}
\lambda^2|dz|^2&=e^{v}|\xi|^{\frac{2}{m}}\left(\prod_{i=1}^m\|F^i\|^{2\frac{\gamma-\epsilon(\sigma_{M+1}+\frac{\rho}{d})}{m(\sigma_M+\epsilon\tau_M)}}\right)|dz|^2\\
&=e^{v}\left(\prod_{i=1}^m\|F^i\|^{2\frac{\gamma-\sigma_M-\epsilon\tau_{M+1}}{m(\sigma_M+\epsilon\tau_M)}}\right)|\xi|^{\frac{2}{m}}\left(\prod_{i=1}^m\|F^i\|^{\frac{2}{m}}\right) |dz|^2\\
&=e^w ds^2
\end{align*}
for a subharmonic function $w$ on $S$. Since $S$ is complete with respect to $ds^2$, applying a result of S. T. Yau \cite{Y76} and L. Karp \cite{K82} we have
$$ \int_S\Omega_{d\tau^2}=\int_Se^w\Omega_{ds^2}=+\infty.$$
This contradiction completes the proof of the theorem.
\end{proof}

\section{Uniqueness theorems for Gauss maps}
% Let $x=(x_0,\ldots,x_{n}): S\rightarrow\R^{m}$ be the immersion of the minimal surface $S$ into $\R^{m}$. Let $(x,y)$ be a isothermal coordinate of $S$. Then $z=x+iy$ is a holomorphic coordinate of $S$. The generalized Gauss map $g$ of $x$ is defined by
% $$ g:S\rightarrow\P^{n}(\C), g:=\left(\dfrac{\partial x_0}{\partial z}:\cdots:\dfrac{\partial x_{n}}{\partial z}\right),\ \ (n=m-1).$$
% Denote by $G$ the reduced representation of $g$ defined by
% $$G=\left(\dfrac{\partial x_0}{\partial z},\ldots,\dfrac{\partial x_{n}}{\partial z}\right).$$
% Also, the metric $ds^2$ on $S$ induced by the canonical metric on $\R^{m}$ satisfies
% $$ ds^2=2\|G\|^2|dz|^2.$$
% We note that, $g$ is holomorphic since $S$ is minimal.

In this section, we will prove main theorems of this paper. Firstly, we prove the following.
\begin{lemma}\label{lem4.1}
Let $S$ be an open Riemann surface and $V$ be a $\ell$-dimension projective subvariety of $\P^n(\C)$. Let $Q_1,\ldots,Q_q$ be $q$ hypersurfaces of $\P^n(\C)$ in $N-$subgeneral position with respect to $V$ and $d=\mathrm{lcm}(\deg Q_1,\ldots,\deg Q_q)$. Let $f^1,f^2$ be holomorphic maps from $S$ into $V$ such that 
\begin{itemize}
\item[(1)] $\bigcap_{j=0}^{k}(f^1)^{-1}(Q_{i_j})=\emptyset$ for every $1\le j_0<\cdots<j_{k}\le q,$
\item[(2)] $f^1=f^2$ on $\bigcup_{j=1}^{q}\left ((f^1)^{-1}(Q_{j})\cup (f^2)^{-1}(Q_{j})\right )$.
\end{itemize}
If $f^1$ is nondegenerate over $I_d(V)$, $S$ is complete with a metric $ds^2=|\xi|^2\|F^1\|^2|dz^2|$, where $\xi$ is a non-vanishing holomorphic function, $z$ is a conformal coordinate on $S$, $F^1$ is a reduced presentation of $f^1$, and 
$$ q> \frac{2N-\ell+1}{\ell+1}\left (M+1+\sigma_M-\sigma_{M-\min\{k,\ell\}}+\frac{M(M+1)}{2d}\right)$$
 then $f^2$ is nondegenerate over $I_d(V)$.
% 
% b) If $V=\P^n(\C), d=1$, $f^1$ is linearly nondegenerate and 
% $$ q> \frac{2N-n+1}{n+1}\left (n+1+\sigma_n-\sigma_{n-k}+\frac{n(n+1)}{2}\right)$$
%  then $f^2$ is linearly nondegenerate.
\end{lemma}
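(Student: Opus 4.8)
The plan is to argue by contradiction: I assume $f^2$ is degenerate over $I_d(V)$ and apply Theorem \ref{thm3.5} to the single nondegenerate map $f^1$ (the case $m=1$), thereby bounding $q$ from above by exactly the quantity the hypothesis assumes $q$ to exceed. To set this up I first construct the auxiliary function. Since $f^2$ is assumed degenerate over $I_d(V)$, there is a class $[P]\in I_d(V)\setminus\{0\}$ with $P(F^2)\equiv 0$; I normalize $\|P\|=1$ and set $h:=P(F^1)$. Because $f^1$ is nondegenerate over $I_d(V)$ and $[P]\ne 0$, we get $h\not\equiv 0$, and Cauchy--Schwarz gives $|h|=|P(F^1)|\le\|F^1\|^{d}$, so the growth requirement of Theorem \ref{thm3.5} holds with $\rho=d$.

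The crucial --- and essentially the only nontrivial --- step is to show that $h$ vanishes on $\bigcup_{j=1}^q(f^1)^{-1}(Q_j)$, and here the sharing hypothesis (2) is exactly what is needed. For a point $a$ in this union, condition (2) forces $f^1(a)=f^2(a)$; since $P(F^2)\equiv 0$ the point $f^2(a)=f^1(a)$ lies on $\{P=0\}$, whence $h(a)=P(F^1)(a)=0$. Consequently $\nu_h\ge\nu^{[1]}_{\prod_{j=1}^q Q_j(F^1)}$, the reduced divisor of $\prod_{j}Q_j(F^1)$.

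It then remains to assemble the pieces. The hypothesis on $q$ gives $q>2N-\ell+1$ (using $M\ge\ell$), so Lemma \ref{lem2.1} produces the weights $\omega_j$, and condition (1) is precisely the input that lets Theorem \ref{thm2.4} apply with the given $k$. Setting $\lambda:=\sigma_M-\sigma_{M-\min\{k,\ell\}}$, Theorem \ref{thm2.4} yields
$$\lambda\nu_h+\nu_{F^1_M}\ge\lambda\,\nu^{[1]}_{\prod_{j=1}^q Q_j(F^1)}+\nu_{F^1_M}\ge\sum_{j=1}^q\omega_j\nu_{Q_j(F^1)}.$$
Thus $f^1$, $h$, $\lambda$ and $\rho=d$ satisfy all the hypotheses of Theorem \ref{thm3.5} with $m=1$ (the complete metric required there is $ds^2$ up to the harmless constant factor $2$, and the reduction in the first paragraph of that proof disposes of any $Q_j$ containing $V$).

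Theorem \ref{thm3.5} then gives
$$q\le\frac{2N-\ell+1}{\ell+1}\left(M+1+\frac{\lambda\rho}{d}+\frac{M(M+1)}{2d}\right)=\frac{2N-\ell+1}{\ell+1}\left(M+1+\sigma_M-\sigma_{M-\min\{k,\ell\}}+\frac{M(M+1)}{2d}\right),$$
which contradicts the assumed lower bound on $q$; hence $f^2$ must be nondegenerate over $I_d(V)$. I expect the only real obstacle to be identifying the auxiliary function $h=P(F^1)$ and verifying that (2) forces its vanishing on the preimages; everything else is a direct bookkeeping application of the already-established Theorems \ref{thm2.4} and \ref{thm3.5}, with the single arithmetic check that the choice $\rho=d$ makes $\lambda\rho/d=\sigma_M-\sigma_{M-\min\{k,\ell\}}$ reproduce the threshold exactly, so the contradiction is sharp.
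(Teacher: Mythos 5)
Your proposal is correct and follows essentially the same route as the paper's own proof: contradiction via a hypersurface $P$ (degree $d$, not containing $V$) with $P(F^2)\equiv 0$, the auxiliary function $h=P(F^1)$ forced to vanish on $\bigcup_j (f^1)^{-1}(Q_j)$ by the sharing condition, the divisor inequality from Theorem \ref{thm2.4} with $\lambda=\sigma_M-\sigma_{M-\min\{k,\ell\}}$, the bound $|h|\le\|F^1\|^d$ giving $\rho=d$, and Theorem \ref{thm3.5} with $m=1$ to contradict the hypothesis on $q$. Your write-up is if anything slightly more careful than the paper's (explicitly recording $\nu_h\ge\nu^{[1]}_{\prod_j Q_j(F^1)}$ and the normalization details), so nothing further is needed.
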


\begin{proof}
Let $F^i=(f^i_0,\ldots,f^i_n)$ be reduce representations of $f^i\ (i=1,2)$. Suppose contrarily that $f^2$ is degenerate over $I_d(V)$. Then there exists a hypersurface $Q$ of degree $d$ such that $V\not\subset Q$, $Q(F^2)\equiv 0$. By the assumption that $f^1$ is nondegenerate over $I_d(V)$, we have $Q(F^1)\not\equiv 0$. Since $f^1=f^2$ on $\bigcup_{i=1}^qQ_i$, we have $Q(F^1)=0$ on $\bigcup_{i=1}^q(f^1)^{-1}Q_i$. Therefore, setting $k'=\min\{k,\ell\}$ and $h=Q(F^1)$, from Theorem \ref{thm2.4} we have 
$$ (\sigma_M-\sigma_{M-k'})\nu_{h}(a)+\nu_{F^1_M}(a)\ge \sum_{j=1}^{k'}\nu_{Q_j(F^{1})}(a)\ge\sum_{j=1}^q\omega_j\nu_{Q_j(F^1)}(a).$$
Also, it is clear that $ |h|\le \|F^1\|^d.$ Applying Theorem \ref{thm3.5}, we have
$$ q\ge \frac{2N-\ell+1}{\ell+1}\left (M+1+\sigma_M-\sigma_{M-k'}+\frac{M(M+1)}{2d}\right).$$
This contradiction completes the proof of  the lemma.
% 
% b) Note that, in this case, $M=l=n$. We just repeat the proof of the assertion (a) with the number $k''=\min\{k,n\}$ in place of $k'$ and applying Theorem \ref{thm2.4}(b) instead of Theorem \ref{thm2.4}(a) and get the conclusion of the assertion (b).
\end{proof}

\begin{proof}[Proof of Theorem \ref{1.1}] Without loss of generality, we may assume that $\deg Q_j=d$ for all $1\le j\le q$. We may suppose that $f^{1}(S)\not\subset Q_j$ for all $j\in\{1,\ldots,q\}$ (otherwise $f^1=f^2$). Let $z$ be a conformal coordinate on $S^1$ and $F^i=(f^i_{0},\ldots,f^i_n)$ be the reduce representation of $f^i$ for each $i\in\{1,2\}$. Since $\Phi$ is a conformal diffeomorphism, there exists a non-vanishing holomorphic function $\xi$ such that $ds^2=\|F^1\|^2|dz^2|=|\xi|^2\|F^2\|^2|dz^2|$. We have $ ds^2$ is complete on $S^1$. Also, from the proof of Lemma \ref{lem4.1}, we see that $f^2(S^1)\subset V$ and $f^2$ is nondegenerate over $I_d(V)$.

Suppose contrarily that $f^1\not\equiv f^2$. 
Then there exists $0\le i<j\le n$ such that
$$h:=f^1_if^2_j-f_1^jf^2_i\not\equiv 0.$$
It is clear that $\nu_h\ge \nu^{[1]}_{\prod_{j=1}^{q}Q_j(F^i)}$ for each $i\in\{1,2\}$. 

From Theorem \ref{thm2.4}, we have
$$2(\sigma_{M}-\sigma_{M-\min\{k,\ell\}})\nu_{h}+\sum_{i=1}^2\nu_{F^i_{M}}\ge \sum_{i=1}^2\sum_{j=1}^{q}\nu_{Q_j(F^i)}.$$
Note that $|h|\le \|F^1\|\cdot\|F^2\|$. By applying Theorem \ref{thm3.5}, we have 
$$ q\le \frac{2N-\ell+1}{\ell+1}\left (M+1+\dfrac{2(\sigma_{M}-\sigma_{M-\min\{k,\ell\}})}{d}+\frac{M(M+1)}{2d}\right).$$
This contradiction completes the proof of the theorem.
\end{proof}

\begin{proof}[Proof of Theorem \ref{1.2}]
Let $z$ be a conformal coordinate on $S^1$ and $F^i$ be the reduce representation of $f^i$ for each $i\in\{1,2\}$. Since $\Phi$ is a conformal diffeomorphism, there exists a non-vanishing holomorphic function $\xi$ such that $ds^2=\|F^1\|^2|dz^2|=|\xi|^2\|F^2\|^2|dz^2|=|\xi|\cdot\|F^1\|\cdot \|F^2\| |dz^2|$. Note that, $ds^2$ is complete on $S^1$.

Suppose contrarily that the theorem does not hold. Consider the simple graph $\mathcal G$ with the set of vertices $\{1,\ldots,q\}$ and the set of edges consisting of all pairs $\{i,j\}$ such that $Q_i(F^1)Q_j(F^2)-Q_j(F^1)Q_i(F^2)\not\equiv 0$. The supposition implies that the order of each vertex does not exceed $[q/2]$. Then, by Dirac's theorem, there is a Hamilton cycle $i_1,\ldots,i_{q},i_{q+1}$, where $i_{q+1}=i_1$. We set $u_j:=i_{j+1}$ if $j<q$ and $u_{q}:=i_1$. Then we have
$$h:=\prod_{j=1}^{q}(Q_{i_j}(F^1)Q_{u_j}(F^2)-Q_{u_j}(F^1)Q_{i_j}(F^2))\not\equiv 0.$$
It is clear that $|h|\le \|F^1\|^{dq}\|F^2\|^{dq}$. 

For each point $a\in\bigcup_{j=1}^{q}(f^1)^{-1}(Q_j)$, take a subset $I_1\subset\{1,\ldots,q\}$ such that $\sharp I=N+1$ and $Q_j(F^1)(a)\ne 0$ for every $j\not\in I_1$. Then there is a subset $I_2\subset I_1$ such that $\sharp I_2=\rank_{\C}\{[Q_j];j\in I_2\}=\ell+1$ and 
$$ \sum_{i\in I_2}(\nu_{Q_i(F^1)}(a)+\nu_{Q_i(F^2)}(a))\ge \sum_{i\in I_1}\omega_i(\nu_{Q_i(F^1)}(a)+\nu_{Q_i(F^2)}(a)).$$
Then, there exists a subset $I\subset I_2$ such that $\sharp I=t$ and $Q_j(F^1)(a)\ne 0$ for every $j\in I_2\setminus I$. Hence, we have
$$\sum_{i\in I}(\nu_{Q_i(F^1)}(a)+\nu_{Q_i(F^2)}(a))\ge \sum_{i=1}^q\omega_i(\nu_{Q_i(F^1)}(a)+\nu_{Q_i(F^2)}(a)).$$
Then, we have
\begin{align*}
\nu_h(a)&\ge 2\sum_{i=1}^{t}\min\{\nu_{Q_i(F^1)}(a),\nu_{Q_i(F^2)}(a)\}+(q-2t)\\
&\ge 2\sum_{i=1}^{t}\big(\min\{\nu_{Q_i(F^1)}(a), M\}+\min\{\nu_{Q_i(F^2)}(a),M\}-M\big)+(q-2t)\\
&=2\sum_{i=1}^{t}\big(\min\{\nu_{Q_i(F^1)}(a), M\}+\min\{\nu_{Q_i(F^2)}(a),M\}\big)+(q-2(M+1)t)\\
&\ge\frac{q+2(M-1)t}{2Mt}\sum_{i=1}^{t}\big(\min\{\nu_{Q_i(F^1)}(a), M\}+\min\{\nu_{Q_i(F^2)}(a),M\}\big).
\end{align*}
Also, by usual arguments we have
\begin{align*}
\nu_{F^1_MF^2_M}(a)\ge&\sum_{j=1}^2\sum_{i=1}^{t}\max\{0,\nu_{Q_i(F^j)}(a)-M \}.
\end{align*}
Therefore, we have the following estimate:
\begin{align*}
\frac{2Mt}{q+2(M-1)t}\nu_h(a)&+\nu_{F^1_MF^2_M}(a)\ge \sum_{i=1}^t(\nu_{Q_i(F^1)}(a)+\nu_{Q_i(F^2)}(a))\\
&\ge \sum_{i=1}^q\omega_i(\nu_{Q_i(F^1)}(a)+\nu_{Q_i(F^2)}(a))
\end{align*}
By Theorem \ref{thm3.5}, we have
\begin{align*}
q&\le \frac{2N-\ell+1}{\ell+1}\left (M+1+\frac{2Mtq}{(q+2(M-1)t)d}+\frac{M(M+1)}{2d}\right)\\
&=\frac{2N-\ell+1}{\ell+1}\left (M+1+\frac{2Mkq}{(q+2(M-1)k)d}+\frac{M(M+1)}{2d}\right)
\end{align*}
and arrive at a contradiction. This completes the proof of the theorem. 
\end{proof}

\section*{Disclosure statement}
No potential conflict of interest was reported by the authors.

\end{document}